\theoremstyle{plain}
\newtheorem{theorem}{Theorem}[section]
\newtheorem*{theorem*}{Theorem}
\newtheorem*{theorem-DisjtSS}{Theorem \ref{Thm: Disjt SS}}
\newtheorem*{theorem-EssentialTorus}{Theorem \ref{Thm: Essential Torus}}
\newtheorem*{cor-ScharlWu}{Corollary \ref{Cor-ScharlWu}}
\newtheorem*{corollary-MSC 2}{Corollary \ref{Cor: MSC 2}}
\newtheorem*{theorem-Main A}{Theorem \ref{Thm: Main A}}
\newtheorem*{theorem-Main B}{Theorem \ref{Thm: Main Thm B}}
\newtheorem*{Cor-Unknotting}{Theorem \ref{Cor: Prime Unknotting 1}}
\newtheorem*{Cor-genusbandsum}{Theorem \ref{Thm: Genus superadd}}
\newtheorem*{Cor-bandsumscc}{Corollary \ref{Cor: Band Sums CC}}
\newtheorem{proposition}[theorem]{Proposition}
\newtheorem{corollary}[theorem]{Corollary}
\newtheorem{lemma}[theorem]{Lemma}
\newtheorem{definition}[theorem]{Definition}
\newtheorem{remark}[theorem]{Remark}
\theoremstyle{definition}
\newtheorem*{question}{Question}
\newcommand{\R}{{\mathbb R}}
\newcommand{\Z}{\mathbb Z}
\newcommand{\LL}{\mathcal{L}}
\newcommand{\bdd}{\partial}
\newcommand{\ra}{\rightarrow}
\newcommand{\x}{\times}
      \def\@setcopyright{}
      \def\serieslogo@{}
\begin{document}

   \title[Prime decomposition of 2-string links]{A prime decomposition theorem for the 2-string link monoid}
   \author{Ryan Blair, John Burke, Robin Koytcheff}
   \email{}
   \thanks{}

\begin{abstract}
In this paper we use 3-manifold techniques to illuminate the structure of the string link monoid. In particular, we give a prime decomposition theorem for string links on two components as well as give necessary conditions for string links to commute under the stacking operation.
\end{abstract}

\maketitle
\date{\today}

\section{Introduction}
It is well known that isotopy classes of knots form a monoid via the operation of connected sum.  This operation is well-defined on both closed knots (embeddings of $S^1$ into $S^3$ or $\R^3$) and long knots (embeddings of $\R$ into $\R^3$ which agree with a fixed linear embedding outside of a compact set).  However, if one tries to generalize this operation to links of more than one component, one finds an analogous monoid structure not for closed links, but for string links.  This operation is given by ``stacking", and the resulting monoid structure is the subject of this paper.

The prime decomposition theorem for knots proven by Schubert in 1949 states that the monoid of isotopy classes of knots  is the free commutative monoid on the isotopy classes of prime knots~\cite{S49}.  One main result of the current paper is an analogous theorem for 2-component string links. An analogue of Schubert's theorem is well known for closed links of any number of components under the operation connected-sum (in the sense of \cite[p.~40]{R76}, i.e., ``along one component"), which is quite different from the operation of stacking.  Our main result is the following:
\begin{theorem*}[Corollary \ref{cor:main}]
A
2-component string link $L$ can be written as the product (under stacking) of prime factors $L=L_1\# ... \# L_n$, where this decomposition is unique up to permuting the order of central elements and multiplication by units.
\end{theorem*}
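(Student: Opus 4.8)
The plan is to deduce the statement from the paper's two principal results: the decomposition theorem \ref{Thm: Main A}, which produces a canonical system of essential surfaces in the string-link exterior and hence an (unordered) prime decomposition, and the commutation theorem \ref{Thm: Main Thm B}, which controls exactly when two stacked factors can be interchanged. I would split the argument into \emph{existence} and \emph{uniqueness}, confronting the non-commutativity of the monoid only in the second part.

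For existence I would induct on a complexity function that strictly decreases under nontrivial stacking factorizations. The natural choice is a genus-type invariant: by the superadditivity result \ref{Thm: Genus superadd}, if $L = A \# B$ with neither factor a unit then the complexity of $L$ strictly exceeds that of each factor, so any chain of nontrivial splittings of $L$ must terminate. Equivalently, one can bound the length of any factorization by the maximal number of disjoint, pairwise non-parallel essential splitting surfaces in the exterior of $L$, a Kneser-type finiteness statement. The terminal factors are by definition prime or units, yielding a decomposition $L = L_1 \# \cdots \# L_n$.

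For uniqueness the key input is that \ref{Thm: Main A} realizes the prime factors as the pieces obtained by cutting the exterior of $L$ along an essential splitting system that is unique up to isotopy. This makes the \emph{multiset} of prime factors an isotopy invariant of $L$, so any two prime decompositions involve the same primes with the same multiplicities; in particular the two lengths agree. It then remains to account for the ordering recorded by the stacking word. Two adjacent prime factors can be transposed precisely when they commute, and here I would invoke \ref{Thm: Main Thm B}: its commutation criterion forces any commuting pair to include a central factor (for instance a local knot tied in a single strand) or a unit. Consequently any two prime decompositions of $L$ differ only by transpositions of central factors and by multiplication by units, which is exactly the asserted uniqueness.

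The main obstacle is the uniqueness half, and two points within it deserve care. First, establishing that the splitting system is unique up to isotopy needs the full strength of \ref{Thm: Main A}, together with the essential-torus analysis of \ref{Thm: Essential Torus}: the splitting surfaces are properly embedded and must meet the strands and the boundary of the cylinder in a prescribed pattern, so the innermost-disk and normal-surface comparison of two competing systems is more delicate than in the closed case. Second, converting ``the same multiset of primes together with the commutation relations of \ref{Thm: Main Thm B}'' into the clean phrase ``unique up to permuting central elements and multiplication by units'' is a short but genuinely non-commutative piece of bookkeeping; I expect it to reduce to a lemma about factorization in a cancellative monoid whose only reorderings come from its center, with cancellativity itself furnished by the uniqueness of the splitting system.
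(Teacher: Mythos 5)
Your proposal rests on results that do not exist in this paper. The theorems you build the argument around --- a ``Main A'' decomposition theorem producing a canonical essential splitting system, a ``Main B'' commutation theorem, a genus-superadditivity theorem, and an essential-torus theorem --- correspond to phantom labels left over in the preamble; none of these statements is formulated, let alone proven, in the text. The actual inputs available are the Freedman--Freedman finiteness theorem (Theorem \ref{Freedman}) for existence, the commutation analysis (Propositions \ref{commute} and \ref{commutecon}) with the resulting description of the center (Corollary \ref{cor:central}), and the pure-case decomposition theorem (Theorem \ref{2LinkPrimeDecomp}), whose proof is the real content. By attributing uniqueness of the splitting system to ``Main A,'' you have assumed precisely what has to be proven: the hard part of Corollary \ref{cor:main} is the comparison of two competing systems of decomposing disks, carried out in the paper in Steps 1--4 of Theorem \ref{2LinkPrimeDecomp} (sorting prime factors into split links, one-strand cables, and the rest; innermost-curve arguments; the Splitting Disk Lemma \ref{SplitDiskLemma}; and the cabling-annulus analysis). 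Your existence argument has the same defect in milder form: genus superadditivity under stacking is not available here, though your fallback --- bounding factorization length by a Kneser-type count of disjoint, non-parallel essential surfaces --- is indeed the paper's actual route via Theorem \ref{Freedman}.

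Two further claims in your uniqueness sketch are wrong or unjustified on their own terms. First, the splitting system is \emph{not} unique up to isotopy: if $L = A \# B = B \# A$ with $A$ and $B$ central (say both split links), the decomposing disks of the two factorizations cut $M$ into genuinely different pairs of pieces, so no isotopy-uniqueness statement can serve as an input; the correct invariance is exactly the conclusion of the theorem. Second, Proposition \ref{commute} does not say that a commuting pair must contain a central element; it says the factors differ by a commuting braid, or that one factor has a cabling annulus while the other has a matching splitting disk, and upgrading this to the clean ``reorder central factors only'' statement is where the paper spends most of its effort (and succeeds only for $n=2$). Finally, you omit the reduction that makes the general case work at all: Theorem \ref{2LinkPrimeDecomp} treats \emph{pure} 2-string links, and Corollary \ref{cor:main} handles arbitrary $L$ by stacking with the half-twist braid $\tau$ so that $L \# \tau$ is pure, then transporting decomposing disks through the diffeomorphism $(D^2 \x I, L) \cong (D^2 \x I, L \# \tau)$ and using Proposition \ref{AltStringLinkEquiv}(3) to recover braid-equivalence of the factors. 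Without that step, even a correct pure-case argument does not yield the stated corollary.
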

In the 2-string link monoid, the center turns out to be generated by \emph{split string links} and \emph{one-strand cables}, while the units are braids.  Along the way, we consider the $n$-string link monoid for any $n$, establishing the necessary conditions for commutativity and characterizing the units and the center of the monoid of $n$-component string links .

This work was motivated by work of the last two authors on operad actions on spaces of string links~\cite{BK}. In turn, that work built upon papers of Budney~\cite{B1, B2}, which, roughly speaking, generalize decomposition theorems about the monoid of isotopy classes of long knots (or $\pi_0$ of the space of long knots) to the level of the \emph{whole space} of long knots.  In seeking generalizations from long knots to string links, it was both natural and necessary to understand the structure of the monoid of isotopy classes of string links.  The results in the current paper regarding 2-string links allows the last two authors to prove a decomposition theorem for a large part of the \emph{whole space} of 2-string links.

\subsection{Acknowledgments}
We thank the referee for a careful reading of the paper, insightful comments, and substantive suggestions.  The last author was partially supported by NSF grant DMS-1006410.  He thanks Tom Goodwillie for useful conversations.

\section{Basic Notions}

An \emph{$n$-string link} is a properly, smoothly embedded collection of $n$ arcs $T$ in $M=D^2 \x I$ such that each arc has one endpoint in $\partial_- M = D^2\times \{0\}$ and one endpoint in $\partial_+ M = D^2\times \{1\}$ and each of $\partial_- M \cap T$ and $\partial_+ M \cap T$ are a collection of $n$ points $\{(x_1,0),(x_2,0),...,(x_n,0)\}$ and $\{(x_1,1),(x_2,1),...,(x_n,1)\}$ respectively.
Furthermore, we require that at each pair of endpoints, the derivatives of all orders of the $n$ embeddings agree with the maps $t \mapsto (x_i, t)$.
An $n$-string link is \emph{pure} if for every component $\alpha$ of $T$, $\alpha\cap \partial_- M=(x_i,0)$ implies $\alpha\cap \partial_+ M=(x_i,1)$.

\begin{definition}[String link equivalence]
\label{StringLinkEquivalence}
String links $T_1$ and $T_2$ in $M$ are equivalent if there is an isotopy of $M$ fixing $\partial M$ that takes $T_1$ to $T_2$.  In this case, (by abuse of notation) we will write $T_1 = T_2$.
\end{definition}

Let $h:D^2\times I \ra I$ be the projection map onto the second coordinate. A \emph{braid} is a string link that is equivalent to a string link $T$ with the property that for every component $\alpha$ of $T$, the restriction of $h$ to $\alpha$ is a smooth, one-to-one and onto function with no critical points in its domain. Given $n$-string links $T_1$ in $M_1$ and $T_2$ in $M_2$, denote the \emph{stacking}
operation by $T_1\# T_2$ which is achieved by gluing $\partial_+ M_1$ to $\partial_- M_2$ via the identity map and considering the $n$-string link which is the image of $T_1 \cup T_2$ under this identification. The resulting quotient of $M_1 \cup M_2$ is again homeomorphic to $D^2\times I$ and we choose to identify the image of $M_1$ with $D^2 \x [0,1/2]$ and the image of $M_2$ with $D^2 \x [1/2,1]$ in the obvious ways. See Figure \ref{StringlinkStacking}.

\begin{figure}[h!]
\begin{picture}(315,270)
\put(0,11){\includegraphics[scale=.6]{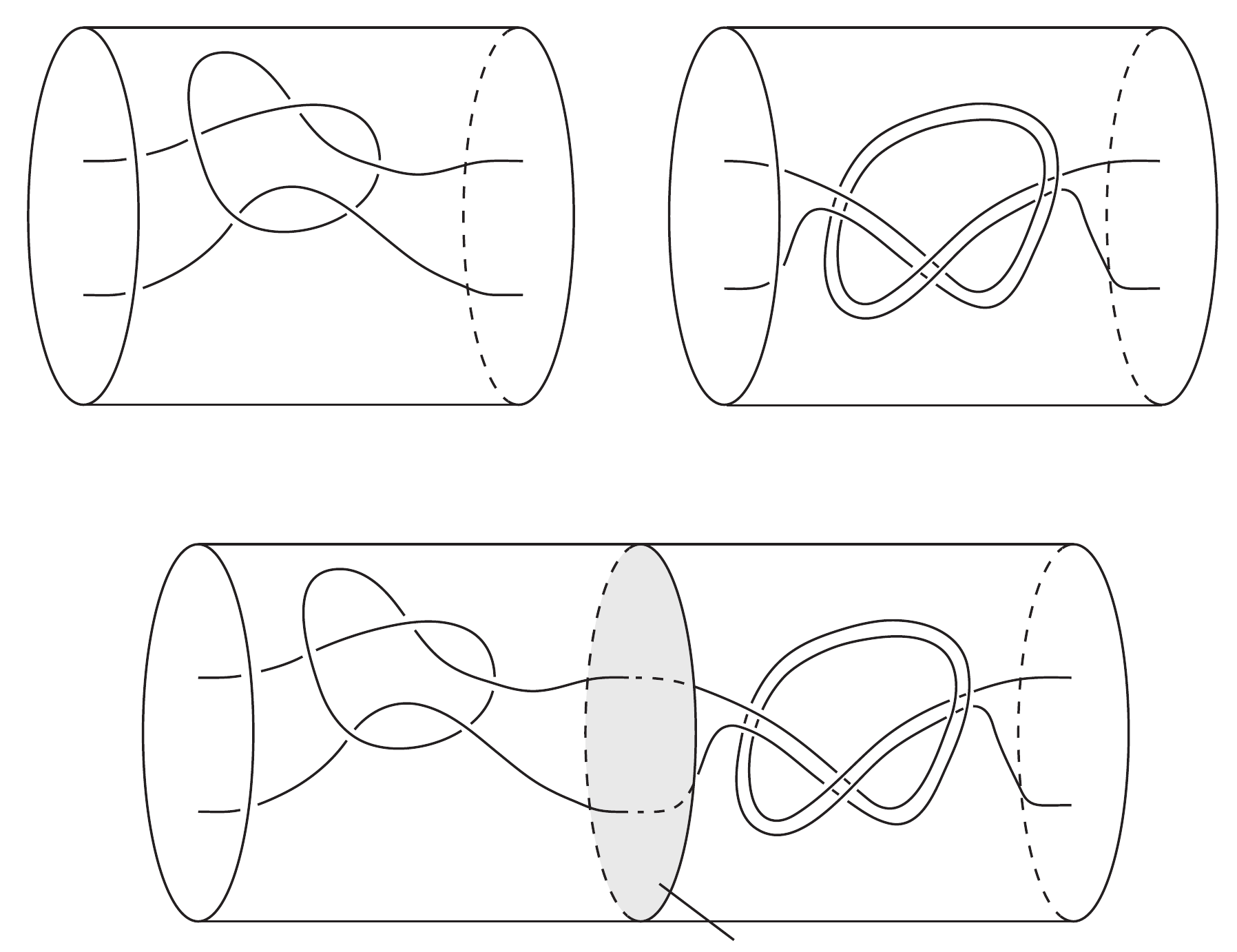}}
\put(73,137){$T_1$}
\put(233, 137){$T_2$}
\put(140, 0){$T_1 \# T_2$}
\put(183,7){$F$}
\put(-5,139){$\partial_- M$}
\put(130,140){$\partial_+ M$}
\end{picture}
\caption{The stacking of two pure string links}
\label{StringlinkStacking}
\end{figure}

\begin{definition}
\label{k-punctured}
Given a compact 1-manifold $T$ properly embedded in a compact 3-manifold $M$, an embedded surface $F$ in $M$ is \emph{$k$-punctured} if $F$ meets $T$ transversely in exactly $k$ points. For example, $F$ in Figure \ref{StringlinkStacking} is 2-punctured.
\end{definition}

\begin{definition}
\label{ProperIsotopy}
Given a surface $F$ properly embedded in a 3-manifold $M$ and a 1-manifold $T$ properly embedded in $M$, an isotopy $\phi_t$ of $F$ in $M$ is \emph{proper} if it extends to an isotopy of pairs $\phi_t:(F,\partial F)\ra (M,\partial M)$. An isotopy $\phi_t$ of $F$ in $M$ is \emph{transverse} to $T$ if the image of $\phi_t$ is transverse to $T$ for every fixed value of $t$.
\end{definition}

\begin{remark}
\label{TransverseT}
 Unless explicitly stated otherwise, all isotopies of surfaces in this paper will be proper isotopies that are transverse to the string link under consideration.\end{remark}

\begin{definition}
\label{BoundaryParallel}
Given a string link $T$ in $M$, a $k$-punctured, properly embedded surface $F$ in $M$ (as in Definition \ref{k-punctured}) is called \emph{boundary-parallel} if there is an isotopy of $F$ in $M$ which takes $F$ to a subsurface contained in $\partial M$.
\end{definition}

\begin{definition}
A \emph{decomposing disk} for an $n$-string link $T$ in $M$ is an $n$-punctured disk which is properly embedded in $M$,
whose boundary is isotopic in $\partial(D^2 \x I)$ to $\partial (\partial_+ M)$, and which is not boundary-parallel (i.e. not isotopic to $\bdd_+ M$ nor $\bdd_- M$). See the disk $F$ in Figure \ref{StringlinkStacking}.
\end{definition}

Note that each strand of a string link passes through a decomposing disk exactly once.

\textbf{Further notation and terminology:}  A decomposing disk $F$ for a string link $T$ in $M$ separates
$M$ into two pieces, one containing $\partial_- M$ and the other containing $\partial_+ M$.  Each piece is homeomorphic to $D^2 \x I$, so the two pieces that $F$ separates $T$ into can be identified with two string links, up to multiplication by braids (cf.~\emph{braid-equivalence} in Definition \ref{BraidEquivalenceDef} and its alternative formulation in Proposition \ref{AltStringLinkEquiv}).  If $K$ is the string link resulting from restricting $T$ to the piece of $M$ containing $\partial_-M$ and $L$ is the string link resulting from restricting $T$ to the piece containing $\partial_+ M$, we say that $F$ \emph{decomposes $T$ as $K\# L$}.
Note that the order of $K$ and $L$ matters.
 We will let $M^F_K$ and $M^F_L$ denote the closures of the complementary components of $F$ in $M$ that contain $K$ and $L$ respectively.

\begin{definition}
A loop $\gamma$ embedded in a punctured surface $F$ is \emph{essential} if it does not bound a 0-punctured disk or a 1-punctured disk in $F$.
\end{definition}

\begin{definition}
Given a string link $T$ in $M$ and a properly embedded, possibly punctured surface $F$ in $M$, $F$ is \emph{compressible} if
there exists a disk $D$ embedded in $M$ such that $D \cap F=\partial D$, $\partial D$ is essential in $F$ and $D$ is disjoint from $T$. Such a disk is called a \emph{compressing disk}.  Otherwise, $F$ is \emph{incompressible}. A surface $F$ is \emph{essential} if $F$ is incompressible and non-boundary parallel.
\end{definition}

Here we take the point of view of studying surfaces $F$ properly embedded in $M$ which may be punctured by the string link $T$.
An alternative perspective would be to study $F\cap M^*$, where $M^*$ is the complement of an open regular neighborhood of $T$ in $M$.  From that perspective, $F$ is \emph{incompressible} in $M$ if the inclusion $F\cap M^*\hookrightarrow M^*$ induces an injection on $\pi_1$.

\begin{lemma}\label{incompdecomp}
Any decomposing disk $F$ for a string link $T=K\# L$ in $M$ is incompressible.
\end{lemma}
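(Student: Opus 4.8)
The plan is to argue by contradiction using a capping-off and parity argument. Suppose $F$ is compressible, so there is a compressing disk $D$ with $D\cap F=\partial D=:\gamma$, where $\gamma$ is essential in $F$ and $D$ is disjoint from $T$. Since $F$ is an $n$-punctured disk, the curve $\gamma$ lies in the interior of $F$, and by the Jordan--Schoenflies theorem applied to the underlying disk, $\gamma$ bounds a unique subdisk $\Delta\subseteq F$ on the side away from $\partial F$. The outer region between $\gamma$ and $\partial F$ is an annulus, so $\Delta$ is the only disk in $F$ that $\gamma$ bounds. Because $\gamma$ is essential, $\Delta$ is neither $0$-punctured nor $1$-punctured, so $\Delta$ contains at least two of the $n$ points of $F\cap T$; in particular at least one strand of $T$ meets $\Delta$.

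Next I would cap off the compression. Since $D\cap F=\gamma$ and $\Delta\subseteq F$, the disks $D$ and $\Delta$ meet exactly along their common boundary $\gamma$, so $S:=D\cup\Delta$ is an embedded $2$-sphere lying in $\interior M$. As $M\cong D^2\x I$ is a $3$-ball, Alexander's theorem shows $S$ bounds a $3$-ball $B\subseteq\interior M$. Now I count intersections of a strand with $S$. Each strand $\alpha$ of $T$ is a properly embedded arc with both endpoints on $\partial M$, hence both endpoints lie outside $\overline{B}$; therefore $\alpha$ meets $S$ transversely in an even number of points. On the other hand, a strand that meets $\Delta$ does so in exactly one point --- each strand crosses the decomposing disk $F\supseteq\Delta$ exactly once --- and it is disjoint from $D$ since $D\cap T=\nil$. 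Thus such a strand meets $S=D\cup\Delta$ in exactly one point, an odd number. Because $\Delta$ contains at least one puncture, this is the desired contradiction, and so $F$ is incompressible.

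The only points requiring care are the topological inputs rather than any estimate. First, I must confirm that $\gamma$ genuinely bounds a subdisk of $F$ with a well-defined puncture count and that ``essential'' forces this count to be at least two; this is immediate from the definition of essential together with the planarity of $F$. Second, I should verify that $S$ is embedded and that $B\subseteq\interior M$, so that the endpoints of every strand lie in the complement of $\overline{B}$; this uses that $\gamma$, and hence $\Delta$, lies in the interior of $F$ away from $\partial F$. Granting these, the parity count --- even from the arc-endpoint argument, odd from the single transverse puncture in $\Delta$ --- is the crux and yields the contradiction directly. I expect the main (and only mild) obstacle to be organizing the transversality and the Schoenflies/Alexander step cleanly; there is no analytic difficulty.
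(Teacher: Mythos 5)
Your proof is correct and takes essentially the same approach as the paper's: both cap off the compressing disk $D$ with the subdisk of $F$ bounded by $\partial D$ (which has at least two punctures by essentiality), use that the resulting embedded sphere bounds a $3$-ball in the $3$-ball $M$, and contradict the fact that each strand crosses a decomposing disk exactly once. The only difference is in the final accounting: the paper traces the arcs of $T$ inside the ball (forcing both endpoints of such an arc onto the subdisk of $F$), while you run a mod-2 parity count of intersections of a strand with the sphere --- two equivalent ways of reaching the same contradiction.
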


\begin{proof}
Suppose $F$ is a decomposing disk.  Then $F$ separates $M$ into $M^F_K$ and $M^F_L$.  Suppose $F$ is compressible.  Let $D$ be a compressing disk for $F$.  We can assume $D$ is completely contained in $M^F_K$ or $M^F_L$. Without loss of generality, assume $D$ is contained in $M^F_K$. Since $D$ is a compressing disk, then, by definition, $\partial D$ is essential in $F$. Since $F$ is a punctured disk, this implies $\partial D$ bounds a punctured disk $D_1\subset F$. Since the 3-ball is irreducible, $D_1 \cup D$ bounds a 3-ball in $M_K^F$. Hence, any arc of $T$ in the 3-ball bounded by $D_1\cup D$ is an arc in $K$ which must have \emph{both} endpoints in $D_1 \subset \partial_+ M^F_K (= \partial_- M^F_L)$.  This contradicts the fact that each strand of a string link passes through a decomposing disk only once.
\end{proof}

The following proposition was originally proven in \cite{K}. Here we provide an alternative proof.

\begin{proposition}
\label{BraidsProp}
If  $K$ and $L$ are $n$-string links such that $K\# L$ is a braid, then both $K$ and $L$ are braids.
\end{proposition}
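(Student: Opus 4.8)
The plan is to reduce to the case of a geometric braid and then show that \emph{any} decomposing disk for a geometric braid can be straightened to a horizontal level disk; cutting along such a disk visibly exhibits each piece as a braid. Since $K\# L$ is a braid, by definition there is an isotopy of $M$, fixing $\partial M$, carrying $K\# L$ to a string link $T'$ each of whose strands is monotone with respect to $h$. This isotopy carries the decomposing disk $F$ to a disk $F'$ which, being the image of a decomposing disk, is again an $n$-punctured, properly embedded, incompressible (Lemma~\ref{incompdecomp}) disk meeting each strand of $T'$ exactly once, with $\partial F'$ isotopic to $\partial(\partial_+M)$ and $F'$ non-boundary-parallel. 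Moreover $F'$ decomposes $T'$ as $K'\# L'$, where $K'$ and $L'$ agree with $K$ and $L$ up to braid-equivalence; since a product of braids is again a braid, it suffices to prove that $K'$ and $L'$ are braids.

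The key step is the following claim: $F'$ is isotopic, through surfaces transverse to $T'$ (Remark~\ref{TransverseT}), to a level disk $D^2\times\{t\}$. To see this, pass to the complement $X=M\setminus N(T')$. Because $T'$ is a geometric braid, the height function $h$ exhibits $X$ as a product $P\times I$, where $P$ is the $n$-punctured disk $D^2\setminus N(\{x_1,\dots,x_n\})$; concretely, a fiber-preserving homeomorphism straightens the monotone strands of $T'$ to vertical arcs. The surface $\widehat{F'}=F'\cap X$ is then incompressible in $X$, is planar, and meets each component of $\partial N(T')$ in a single meridian, because $F'$ meets each strand transversely exactly once.

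Applying the classification of incompressible (and boundary-incompressible) surfaces in the product $P\times I$, $\widehat{F'}$ must be isotopic either to a vertical surface $c\times I$ or to a horizontal fiber $P\times\{t\}$. The meridional boundary on $\partial N(T')$ rules out the vertical case, so $\widehat{F'}$ is horizontal, and capping off the meridians recovers $F'$ as the level disk $D^2\times\{t\}$. Cutting $T'$ along this disk then writes it as the stack of its restrictions to $D^2\times[0,t]$ and $D^2\times[t,1]$; each restriction is a sub-collection of monotone strands, hence a geometric braid, and by construction these restrictions are exactly $K'$ and $L'$. Therefore $K'$ and $L'$, and hence $K$ and $L$, are braids.

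I expect the main obstacle to be the straightening claim, i.e.\ controlling the decomposing disk inside the braid complement. The delicate points are (i) verifying boundary-incompressibility along $\partial N(T')$ and along $\partial D^2\times I$ so that the classification of surfaces in the $I$-bundle $P\times I$ applies, or else carrying out the cancellation of the critical points of $h|_{F'}$ directly via innermost-disk and irreducibility arguments using the incompressibility of $F'$; and (ii) arranging that every isotopy remains transverse to $T'$ and respects the boundary conditions on both $\partial M$ and $\partial N(T')$ throughout. Once the disk is straightened, the remaining bookkeeping identifying the two pieces with $K$ and $L$ up to braid-equivalence is routine.
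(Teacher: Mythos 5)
This is essentially the paper's own proof: the paper likewise combines Lemma \ref{incompdecomp} with the $I$-bundle (product) structure that the braid $K\# L$ puts on $M$, then cites Corollary 3.2 of \cite{W68} to straighten the incompressible decomposing disk to a level (horizontal) disk, after which cutting along it exhibits both pieces as braids. Your delicate point (i) is not actually an obstacle: the form of Waldhausen's result the paper invokes applies to an incompressible surface whose boundary lies in the vertical part of $\partial(P\times I)$ and which separates the two horizontal boundary components, so no boundary-incompressibility hypothesis needs to be verified.
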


\begin{proof}
Let $F$ be the $n$-punctured disk corresponding to $\partial_+ M^F_K$ and $\partial_- M^F_L$ in $M$.
By Lemma \ref{incompdecomp}, $F$ is incompressible. Since $K\# L$ is a braid, $M$ can be decomposed as an $I$-bundle over $D^2$ with $K\# L$ the union of $I$-fibers. Since $F$ is incompressible and separates $\partial_- M$ and $\partial_+ M$, then, by Corollary 3.2 of \cite{W68}, $F$ is isotopic to both $\partial_+ M$ and $\partial_- M$. By isotopy extension, $M^F_K$ can be decomposed as an $I$-bundle over $D^2$ with $K$ the union of $I$-fibers and $M^F_L$ can be decomposed as an $I$-bundle over $D^2$ with $L$ the union of $I$-fibers. Thus, both $K$ and $L$ are braids.
\end{proof}

\begin{corollary}
The units in the $n$-string link monoid are precisely the braids.
\end{corollary}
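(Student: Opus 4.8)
The plan is to prove the two inclusions separately, with the substantive direction supplied by Proposition~\ref{BraidsProp}. First I would pin down the identity of the monoid: stacking a string link $T$ with the trivial string link (the $n$ parallel $I$-fibers $t\mapsto(x_i,t)$) returns $T$ up to isotopy fixing $\partial M$, so this trivial string link is the monoid identity $e$, and it is manifestly a braid.

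For the inclusion ``braids are units'', I would recall that braids are closed under stacking — a composite in which the restriction of $h$ to each strand has no critical points again has none after a small isotopy straightening the glued level $D^2\times\{1/2\}$ — and that each braid admits a stacking-inverse. Concretely, given a braid $K$, its reflection through $D^2\times\{1/2\}$ (equivalently, $K$ run in reverse) is a braid $K'$ with $K\# K'$ isotopic rel boundary to $e$. This is precisely the statement that geometric braids form the braid group $B_n$ under concatenation. Hence every braid is a unit.

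For the reverse inclusion ``units are braids'', suppose $K$ is a unit, so there is a string link $L$ with $K\# L=e$. Since $e$ is a braid, $K\# L$ is a braid, and Proposition~\ref{BraidsProp} then forces both $K$ and $L$ to be braids; in particular $K$ is a braid. Combining the two inclusions gives the corollary.

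The only content beyond Proposition~\ref{BraidsProp} lies in the forward direction, and the main (mild) obstacle there is to confirm that stacking a braid with its reverse genuinely yields the trivial braid rel boundary, i.e.\ that the group structure on $B_n$ is compatible with the monoid operation $\#$. Once this and the identification of $e$ as a braid are in place, the corollary is immediate.
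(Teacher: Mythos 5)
Your proposal is correct and follows essentially the same route as the paper: the paper's proof is exactly your ``units are braids'' direction (the trivial string link is a braid, so $K\# L = e$ forces $K$ and $L$ to be braids by Proposition~\ref{BraidsProp}), with the converse direction---that braids are invertible under stacking---left implicit as standard. Your only addition is spelling out that easy direction via the braid group structure, which is a fine (and correct) bit of extra care.
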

\begin{proof}
The trivial string link is a braid, so this follows immediately from Proposition \ref{BraidsProp}.
\end{proof}

\begin{definition}
(a)  An $n$-string link $L$ embedded in $M=D^2 \times I$ is \emph{prime} if $L=J\# K$ implies $J$ or $K$ is a braid and if $L$ is not a braid itself.

(b) Equivalently, $L$ is \emph{prime} if every $n$-punctured disk
properly embedded in $M$ with boundary isotopic to $\partial (\partial_+ M)$ is boundary parallel, but $\partial_+ M$ is not isotopic to $\partial_- M$.

(c) Equivalently, a string link is \emph{prime} if it is not a braid and has no decomposing disks.
\end{definition}

We leave it to the reader to verify the equivalence of the above three definitions.


\section{Commutativity in the String Link Monoid}

In this Section, we study when two string links commute.  The most difficult result is establishing some necessary conditions for commutativity, namely Proposition \ref{commute}.  We also establish some sufficient conditions in Proposition \ref{commutecon}.

Recall that, given a string link $T$, we only consider isotopies of surfaces embedded in $M$ that are everywhere transverse to $T$.

\begin{definition}
\label{SplitLink}
An $n$-string link $T$ embedded in $M=D^2 \times I$ is \emph{split} if there exists a compressing disk for $\partial M$ which meets $\partial (\partial_- M)$ minimally in exactly two points.
\end{definition}

Note that any braid on two or more strands is a split link.

\begin{figure}[h!]
\begin{picture}(150,100)
\put(0,0){\includegraphics[scale=.6]{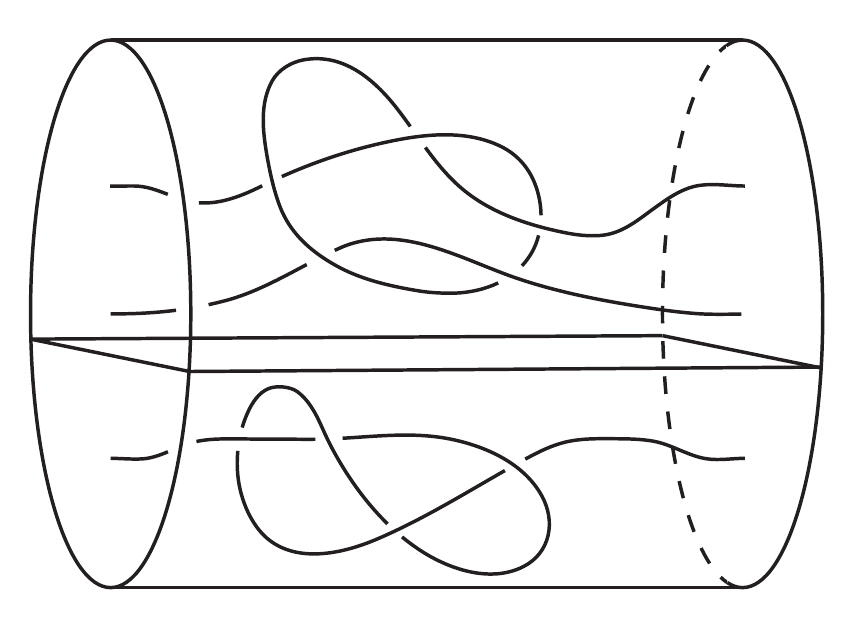}}
\end{picture}
\caption{A split link. The rectangular disk shown is the compressing disk in Definition \ref{SplitLink}.}
\label{Splitdisk}
\end{figure}

\begin{definition}
A \emph{splitting disk} for an $n$-string link $T$ in $M=D^2 \x I$ is a properly embedded punctured disk $F$ in $M$ such that $\partial F$ is contained in $\partial_- M$ (resp. $\partial_+ M$), each strand of $T$ meets the splitting disk at most once and the splitting disk is not transversely isotopic to a subdisk of $\partial_- M$ (resp. $\partial_+ M$).
\end{definition}

\begin{figure}[h!]
\begin{picture}(250,100)
\put(0,10){\includegraphics[scale=.6]{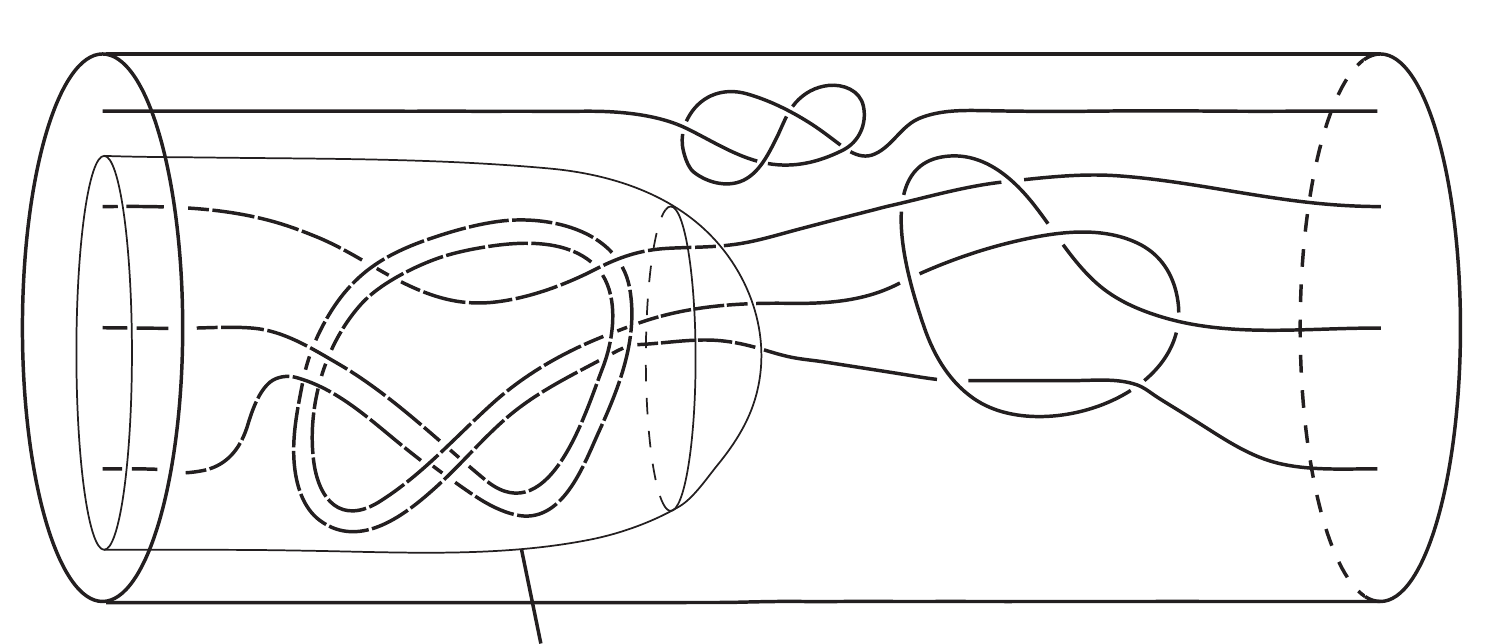}}
\put(90,0){$F$}
\end{picture}
\caption{A splitting disk.}
\label{Splitdisk}
\end{figure}

\begin{lemma}\label{splitdiskprime}
Let $T$ in $M$ be a prime string link with splitting disk $F$ such that $\partial F \subset \partial_{\pm} M$.  Then the union of the punctured annulus in $\partial_{\pm} M$ cobounded by $\partial \partial_{\pm} M$ and $\partial F$ together with the punctured disk $F$ is isotopic to $\partial_{\mp} M$.
\end{lemma}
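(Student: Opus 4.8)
The plan is to promote $F$ to a genuine decomposing-disk candidate and then let primality do the work. Assume without loss of generality that $\partial F \subset \partial_- M$, and write $\partial \partial_- M$ for the boundary circle of the disk $\partial_- M$. The circle $\partial F$ separates $\partial_- M$ into an inner disk $D_0$ and the annulus $A$ cobounded by $\partial F$ and $\partial \partial_- M$. Let $G$ be the disk $A \cup F$, glued along $\partial F$; after pushing the interior of $A$ slightly off $\partial M$ (transverse to $T$), $G$ is properly embedded with $\partial G = \partial \partial_- M$, which is isotopic in $\partial M$ to $\partial \partial_+ M$ by sliding up the side annulus $\partial D^2 \times I$. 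So $G$ is exactly the union named in the statement, and the goal becomes showing that $G$ is isotopic to $\partial_+ M = \partial_\mp M$.

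First I would check that $G$ is $n$-punctured. Since $F \cup D_0$ is a $2$-sphere it bounds a ball $B$ by irreducibility of $M$. Each strand of $T$ meets $\partial_- M$ in a single endpoint, lying in $D_0$ or in $A$. A strand with endpoint in $D_0$ starts on $\partial B$ and must leave $B$ to reach $\partial_+ M$; since it cannot re-enter $\partial_- M$, it exits through $F$, and by the splitting-disk hypothesis it does so exactly once, puncturing $F$ and missing $A$. A strand with endpoint in $A$ cannot cross $F$ at all, since a single crossing would trap it inside $B$ with no second exit available, again violating that it meets $F$ at most once; such a strand punctures the pushed-in annulus exactly once and misses $F$. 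Summing, $G$ meets $T$ in precisely $n$ points, so $G$ is an $n$-punctured disk properly embedded in $M$ with boundary isotopic to $\partial \partial_+ M$.

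Next I would invoke primality. Because $T$ is prime it has no decomposing disk, so every such $n$-punctured disk is boundary parallel; hence $G$ is isotopic either to $\partial_+ M$ or to $\partial_- M$. To finish I would exclude the second possibility. The surfaces $G$ and $\partial_- M$ cobound exactly the ball $B$ (they agree along $A$ and differ by swapping $F$ for $D_0$), so $G$ is isotopic to $\partial_- M$ precisely when $F$ can be isotoped to $D_0$ rel $\partial F$ through $B$, transverse to $T$. Such an isotopy fixes $\partial F \subset \partial_- M$ and is therefore a proper transverse isotopy of $F$ onto the subdisk $D_0 \subset \partial_- M$, directly contradicting the defining property of a splitting disk. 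Thus $G$ is isotopic to $\partial_+ M$, as required.

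I expect the puncture count of the second paragraph to be the crux: it is where the splitting-disk condition ``each strand meets $F$ at most once'' must be paired with irreducibility of $B$ to control precisely how each strand meets $F$ versus $A$. A secondary point needing care is the bookkeeping between the properly embedded model $G$ (with $A$ pushed into the interior) and the boundary-contained surface $A \cup F$ of the statement, so that the isotopy realizing $G \simeq \partial_- M$ genuinely restricts to a proper, transverse isotopy of $F$ to a subdisk of $\partial_- M$; all isotopies throughout must be kept proper and transverse to $T$, in keeping with the standing convention.
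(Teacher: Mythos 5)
Your proof is correct and follows essentially the same route as the paper's: form the disk $G = A \cup F$, push it off $\partial M$ to make it properly embedded, invoke primality to conclude it is isotopic to $\partial_- M$ or $\partial_+ M$, and rule out the $\partial_- M$ case because it would make $F$ transversely isotopic to a subdisk of $\partial_- M$, contradicting the splitting-disk definition. The only difference is that you carefully verify the puncture count (that $G$ meets each strand exactly once, via the ball bounded by $F \cup D_0$ and a parity argument), a point the paper simply asserts.
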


\begin{figure}[h!]
\begin{picture}(250,105)
\put(30,0){\includegraphics[scale=.6]{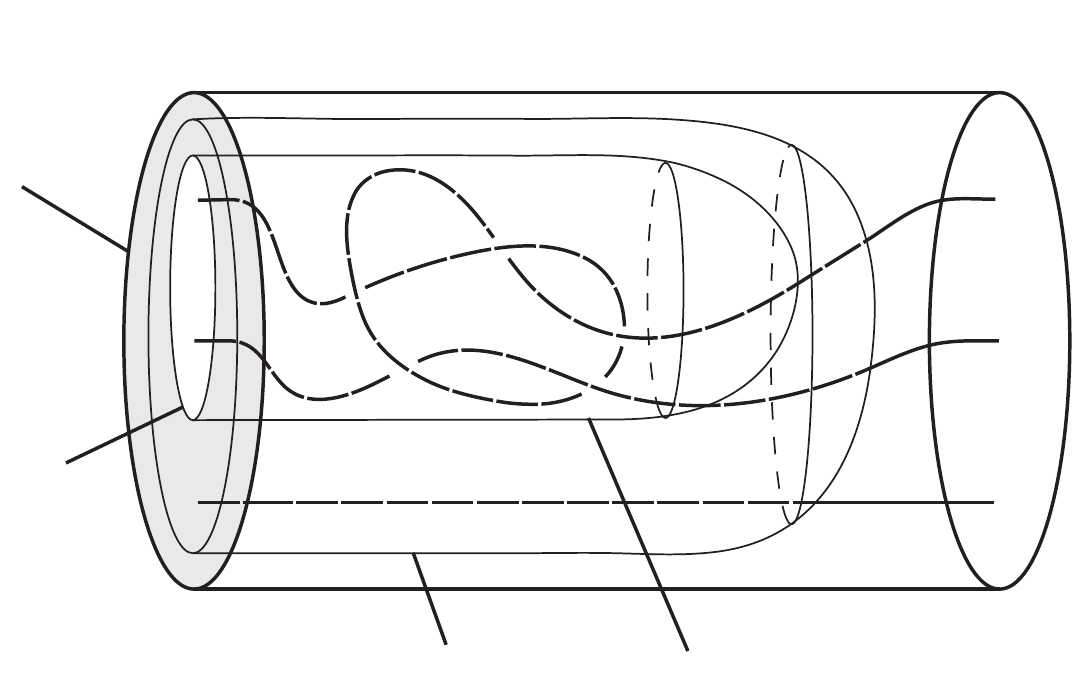}}
\put(10,90){$\partial \partial_{-} M$}
\put(26,31){$\partial F$}
\put(147,-2){$F$}
\put (105,-1){$F'$}
\end{picture}
\caption{A prime string link and splitting disks $F$ and $F'$.}
\label{splitdisks}
\end{figure}

\begin{proof}
The union of the punctured annulus in $\partial_{\pm} M$ cobounded by $\partial \partial_{\pm} M$ and $\partial F$ together with the punctured disk $F$ is a punctured disk $D$ that meets each component of $T$ exactly once and has boundary $\partial \partial_{\pm} M$. After a small transverse isotopy of $D$ that pushes $D$ off of $\partial_{\pm} M$, $D$ is a properly embedded disk in $M$. Since $T$ is prime, $D$ must be isotopic to either $\partial_{-} M$ or $\partial_{+} M$. From one of these two possibilities we conclude that $F$ is isotopic to the subdisk of $\partial_{\pm} M$ with boundary $\partial F$, a contradiction to the definition of splitting disk. The other possibility is the desired conclusion.

\end{proof}

Note that the number of punctures in a splitting disk is not required to be minimal. Thus a prime, split string link may have non-isotopic splitting disks, each intersecting a different number of ``trivial strands." See Figure \ref{splitdisks}. However, a prime non-split string link has a unique splitting disk $F$ up to isotopy, provided we fix which side of $\partial_M$ ($\partial_-$ or $\partial_+$) its boundary $\partial F$ lies in.

\begin{definition}
\label{CablingAnnulusDefn}
If $T$ embedded in $M$ is a string link, then a properly embedded annulus $A$ in $M$ is a \emph{cabling annulus} if one component of $\partial A$ is contained in $\partial_- M$, the other component is contained in $\partial_+ M$ and, if $N$ is the copy of $D^2 \times I$ that $A$ cobounds with two disks in $\partial M$, then $T\cap N \neq \emptyset$ and $T\cap N$ is a braid in $N$.  (Note that we do not consider the empty submanifold to be a braid.)
\end{definition}

\begin{figure}[h!]
\begin{picture}(175,122)
\put(0,5){\includegraphics[scale=.7]{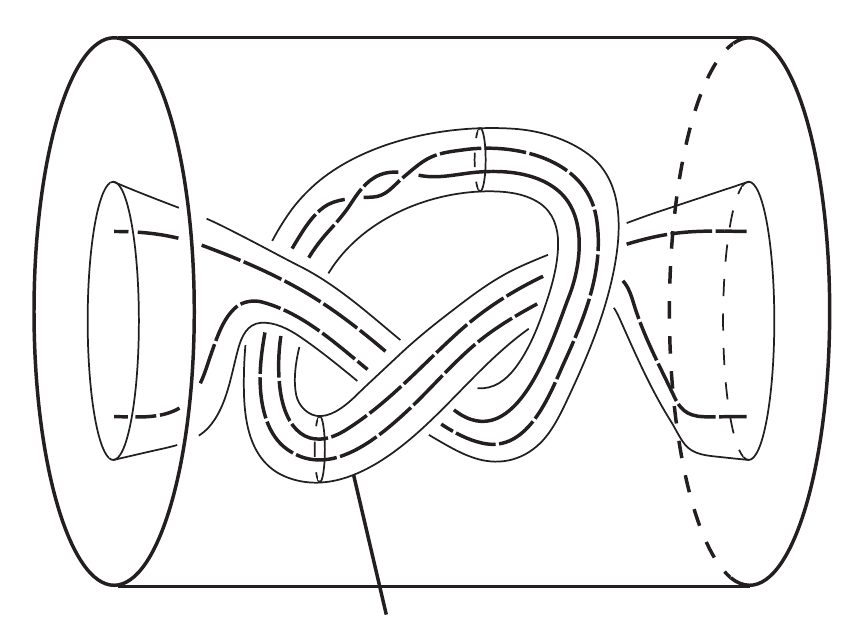}}
\put(75,0){$A$}
\end{picture}
\caption{Cabling}
\label{Cabling Annulus}
\end{figure}

Note that the boundary of a regular neighborhood of a single strand of a string link contains a cabling annulus. \\

An isotopy of a stacking of two string links will not necessarily fix $D^2 \x \{\frac{1}{2}\}$.
This motivates the following definition.

\begin{definition}
\label{BraidEquivalenceDef}
Two n-string links $T_1$ and $T_2$ are \emph{braid-equivalent} if there exist braids $B_1$ and $B_2$ such that $T_1=B_{1}\# T_{2}\# B_{2}$.
\end{definition}

\begin{proposition}
\label{BraidEquivalenceProp}
String links $T_1$ and $T_2$ are braid-equivalent if and only if there is an isotopy of $M$ which fixes $(\partial D^2) \x I$ and which takes $T_1$ to $T_2$.
\end{proposition}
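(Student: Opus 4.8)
The plan is to prove the two implications separately, with the essential mechanism being that stacking with a braid is exactly the ambient motion of $M$ recorded by the braid's underlying isotopy of point-configurations in $D^2$.

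For the ``only if'' direction, I would first reduce to a single braid: it suffices to show that for any braid $B$ the string link $T_2 \# B$ is carried to $T_2$ by an isotopy of $M$ fixing $(\partial D^2)\x I$ (and symmetrically for $B\# T_2$), since composing two such isotopies with the defining rel-$\partial M$ isotopy realizing $T_1 = B_1\# T_2\# B_2$ yields the desired lateral-boundary-fixing isotopy from $T_1$ to $T_2$. To handle one braid, put $B$ in monotone position in the top slab $D^2 \x [\tfrac12,1]$, so its strands are graphs $t\mapsto (c_i(t),t)$ of a motion of $n$ distinct points with $c_i(\tfrac12)=x_i$. By the isotopy extension theorem there is an isotopy $\Psi_t$ of $D^2$ for $t\in[\tfrac12,1]$ fixing $\partial D^2$, with $\Psi_{1/2}=\mathrm{id}$ and $\Psi_t(x_i)=c_i(t)$; extend by $\Psi_t=\mathrm{id}$ for $t\le \tfrac12$. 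Then $G_s(x,t):=(\Psi_{st}(x),t)$ is an isotopy of $M$ from the identity to $G(x,t)=(\Psi_t(x),t)$ fixing $(\partial D^2)\x I$ throughout, and $G$ sends the trivial braid on the top slab to $B$ while fixing the bottom half pointwise, so $G(T_2)=T_2\# B$. Hence $s\mapsto G_s^{-1}$ is a lateral-boundary-fixing isotopy taking $T_2\# B$ back to $T_2$.

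For the ``if'' direction, suppose $H_s$ is an isotopy of $M$ with $H_0=\mathrm{id}$, $H_1(T_1)=T_2$, and $H_s|_{(\partial D^2)\x I}=\mathrm{id}$. The restrictions $h_s^{\pm}:=H_s|_{\partial_{\pm} M}$ are ambient isotopies of the disks $\partial_{\pm} M$ fixing their boundary circles, and since both $T_1$ and $T_2$ have endpoints at the standard configuration $\{x_i\}$ on each of $\partial_-M$ and $\partial_+M$, the traces of these points are loops of configurations, i.e.\ braids $B_-$ and $B_+$. By damping these boundary isotopies into disjoint collars of $\partial_{\pm}M$, I would build an isotopy $K_s$ of $M$ fixing $(\partial D^2)\x I$, level-preserving on each collar and the identity in the middle region $D^2\x[\epsilon,1-\epsilon]$, with $K_s|_{\partial_{\pm}M}=h_s^{\pm}$. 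Then $L_s:=K_s^{-1}\circ H_s$ fixes all of $\partial M$ (on $\partial_{\pm}M$ it is $(h_s^{\pm})^{-1}h_s^{\pm}=\mathrm{id}$), so it is an honest string-link isotopy realizing $T_1 = L_1(T_1)=K_1^{-1}(T_2)$.

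It then remains to identify $K_1^{-1}(T_2)$ as a braid-stacking of $T_2$. Because $K_1^{-1}$ is level-preserving on each collar and the identity in between, and because the strands of $T_2$ are monotone (transverse to levels) in a sufficiently thin collar by the derivative condition at the endpoints, $K_1^{-1}$ carries these collar arcs to monotone arcs. Thus $K_1^{-1}(T_2)=B_1'\# (T_2\cap\text{middle})\# B_2'$ for braids $B_i'$, while $T_2$ itself is $C_1\#(T_2\cap\text{middle})\# C_2$ for braids $C_i$; since braids form a group under stacking, eliminating the common middle factor shows $T_1=K_1^{-1}(T_2)$ is braid-equivalent to $T_2$. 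I expect the main obstacle to be this reverse direction: the crux is converting a merely lateral-boundary-fixing isotopy into a genuine (full-boundary) string-link isotopy while accounting for the induced endpoint motions, and then verifying that the correction term $K_1^{-1}$, applied to $T_2$, genuinely factors as $\text{braid}\#T_2\#\text{braid}$. The key technical inputs are the isotopy extension theorem and the observation that level-preserving homeomorphisms preserve monotonicity of arcs and hence send braids to braids.
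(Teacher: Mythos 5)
Your proposal is correct, and its forward direction is essentially identical to the paper's: both invoke the isotopy extension theorem on the point-motion underlying a braid and cone the resulting disk isotopy via $G_s(x,t)=(\Psi_{st}(x),t)$, which is literally the paper's $H_s(x,t)=(h_{st}(x),t)$. The reverse direction rests on the same key observation as the paper's (the restrictions $h^\pm_s$ of $H_s$ to $\partial_\pm M$ are disk isotopies whose traces on the endpoint configuration are braids, and they can be damped off into slabs), but you package it differently. The paper stacks trivial string links $\iota$ on both sides of $T_1$, compresses $H$ into the middle half of $M$, and fills the outer quarter-slabs with the damped boundary isotopies; the resulting $\widetilde{H}$ fixes all of $\partial M$, so $T_1=\iota\# T_1\#\iota$ is equivalent to $\widetilde{H}_1(\iota\# T_1\#\iota)$, which is visibly $B_1\# T_2\# B_2$ with the braids appearing as the images of the trivial strands --- no factorization or cancellation is ever needed. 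You instead correct $H_s$ multiplicatively by a collar-supported, level-preserving isotopy $K_s$, obtain the rel-$\partial M$ isotopy $L_s=K_s^{-1}\circ H_s$, hence $T_1=K_1^{-1}(T_2)$ as string links, and then must factor $K_1^{-1}(T_2)$. What this buys is a clean structural statement ($T_1$ and $T_2$ differ by an explicit level-preserving diffeomorphism supported in collars); what it costs is the final cancellation step, which as written is slightly imprecise: the ``common middle factor'' $T_2\cap(D^2\times[\epsilon,1-\epsilon])$ is not literally a string link, since its endpoints sit where $T_2$ crosses the levels $\epsilon$ and $1-\epsilon$ rather than at the standard points $x_i$, so the equations $T_2=C_1\# X\# C_2$ and $K_1^{-1}(T_2)=B_1'\# X\# B_2'$ are not defined verbatim under the paper's stacking operation. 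This is routine to repair --- for instance, take the correcting braid to be a collar arc of $K_1^{-1}(T_2)$ concatenated with the reversed corresponding collar arc of $T_2$, and cancel the resulting fold by a rel-boundary isotopy --- so it is a cosmetic blemish rather than a genuine gap, but the paper's stack-and-extend trick is worth knowing precisely because it sidesteps this bookkeeping entirely.
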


\begin{proof}
``$\Rightarrow$": Suppose $T_1=B_{1}\# T_{2}\# B_{2}$.  Any braid $B$ in $M$ can be viewed as an isotopy $h_s, s \in I$, of points in $D^2$.  Let $D^2(1-\varepsilon)$ denote the disk of radius $(1-\varepsilon)$ centered at $(0,0)$.  By the isotopy extension theorem, $h_s$ extends to an isotopy of all of $D^2$ with support in $D^2(1-\varepsilon)$ and with $h_0=id$.  For $(x,t) \in D^2 \x I = M$, define $H_s(x,t) := (h_{st}(x), t)$.  Thus $H$ is an isotopy from the standard trivial string link to $B$.  Let $H^1, H^2$ be such isotopies corresponding to the braids $B_1, B_2$.  Take $T_2$ and stack a trivial string link on either side.  Applying $H^1, H^2$ to the left-hand and right-hand pieces of $M$ containing these trivial string links yields an isotopy as in the Proposition statement taking $T_2$ to $T_1$.

``$\Leftarrow$":  Let $H=H_s(x,t)$ be an isotopy of $M$ as in the Proposition statement.
Let $h^0, h^1$ be isotopies of $D^2$ given by $h^0_s(x)=\pi \circ H_s(x,0)$ and $h^1_s(x)=\pi \circ H_s(x,1)$, where $\pi : D^2 \x I \to D^2$ is the projection.  Note that $h^0$ and $h^1$ start at the identity map.
Take $T_1$ and stack a trivial string link $\iota$ on either side, so that $T_1$ lies in $D^2 \x [\frac{1}{4}, \frac{3}{4}]$.  Denote the result $\iota \# T_1 \# \iota$.  We will extend $H$ from the piece of $M$ containing $T_1$ to an isotopy $\widetilde{H}$ of all of $M$ which fixes $\partial M$.  In fact, let $\sigma : I \to I$ be given by $\sigma(t) = t/2 + 1/4$, and define
\[
\widetilde{H}_s(x,t) :=
\left\{
\begin{array}{ll}
(h^0_{s(4t)}(x), t) & \mbox{if $0 \leq t \leq 1/4$} \\
(id \x \sigma) \circ H_s(x, 2t - 1/2) & \mbox{if $1/4 \leq t \leq 3/4$} \\
(h^1_{s(4-4t)}(x), t) & \mbox{if $3/4 \leq t \leq 1$}
\end{array}
\right.
\]
Thus $T_2$ is equivalent as a string link to the image of $\iota \# T_1 \# \iota$ under $\widetilde{H}_1$.  Clearly the images of the copies of $\iota$ are braids, while $T_1$ is taken to $T_2$.
\end{proof}

\begin{proposition}\label{commute}
Suppose $K$ and $L$ are prime $n$-string links. If $T=K \# L=L \# K$, then at least one of the following holds:
\begin{itemize}
\item[(1)]
 $K$ and $L$ differ by a braid $B$ which commutes with both $K$ and $L$ (which implies that $K$ is braid equivalent to $L$);
\item[(2)]
Up to relabeling $K$ and $L$, $L$ is a string link with cabling annulus $A$, and $K$ has a splitting disk $F$ such that $\partial F$ is isotopic to a component of $\partial A$.
\end{itemize}
\end{proposition}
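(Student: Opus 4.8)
The plan is to compare two decomposing disks coming from the two orderings of the product. Fix a representative of $T$ in $M$ together with a decomposing disk $F$ realizing $T=K\# L$ (so $K$ lies in $M^F_K$ below $F$ and $L$ lies in $M^F_L$ above $F$). Since $T=L\# K$ as well, the same representative also admits a decomposing disk $F'$ realizing $L\# K$, with $L$ below $F'$ and $K$ above. Because $\partial F$ and $\partial F'$ are both isotopic to $\partial(\partial_+M)$ on the sphere $\partial M$, I would first isotope them to disjoint parallel curves on the annulus $(\partial D^2)\times I$, and arrange that the puncture sets $F\cap T$ and $F'\cap T$ are disjoint. Then $F\cap F'$ is a disjoint union of circles lying in the interior of $M$ and disjoint from $T$, and I would isotope (transversely to $T$) to put $F$ and $F'$ in minimal position.

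Next I would remove inessential circles. Let $c$ be a circle of $F\cap F'$ that is innermost on $F$, bounding $D\subset F$, and let $D'\subset F'$ be the disk it bounds on $F'$. The key observation is a parity argument: each strand of $T$ meets the sphere $S=D\cup D'$ an even number of times, yet meets $F$ once and $F'$ once, so it meets $S$ either zero or two times; hence the strands puncturing $D$ are exactly those puncturing $D'$. In particular, if $D$ is unpunctured then so is $D'$, and since $M$ is an irreducible ball, $S$ bounds a ball disjoint from $T$ across which $F'$ can be isotoped, reducing $|F\cap F'|$ and contradicting minimality. Thus in minimal position every innermost disk of the intersection is punctured.

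If $F\cap F'=\varnothing$, then the two disjoint decomposing disks are nested between $\partial_-M$ and $\partial_+M$, and the region $R\cong D^2\times I$ between them carries a string-link factor $B$. Reading the two disks shows $L=K\# B=B\# K$ (or symmetrically $K=L\# B=B\# L$); since $L$ is prime and $K$, being prime, is not a braid, Proposition \ref{BraidsProp} forces $B$ to be a braid. Because $K\# B=B\# K$, one checks $B\# L=B\# K\# B=K\# B\# B=L\# B$, so $B$ commutes with both $K$ and $L$, which is exactly conclusion (1).

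The remaining, and by far the hardest, case is when $F\cap F'\neq\varnothing$. Here every innermost disk $D$ has $p\ge 1$ punctures, and by the parity argument the ball $B$ bounded by $S=D\cup D'$ meets $T$ in exactly $p$ arcs, one for each of the $p$ strands passing through both $D$ and $D'$. Minimality prevents this sub-tangle from being a trivial (removable) product; the goal is to show instead that primeness of $K$ and of $L$ forces these $p$ arcs to form a cable, that is, a braid inside a sub-cylinder $N\subset B$ whose frontier annulus $A=\partial N$ is a cabling annulus (in the sense of Definition \ref{CablingAnnulusDefn}) for whichever of $K$, $L$ contains $N$, while the sub-disk $D$ of the decomposing disk provides a splitting disk $F$ for the other factor with $\partial F$ isotopic to a component of $\partial A$. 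This yields conclusion (2). I expect this last step to be the main obstacle: one must rule out, using primeness together with Lemma \ref{splitdiskprime} and the essential uniqueness of splitting disks of prime links, the possibility that the $p$ strands in $B$ are knotted or linked in a way that is not a cable, and then verify that the resulting annulus and disk have matching boundaries. Passing to the complement $M^*=M\setminus\mathring N(T)$, where $F$ and $F'$ become incompressible surfaces and the surviving intersection circles are essential, and interpreting the cabling annulus as an essential annulus in $M^*$, is the framework I would use to organize this final argument.
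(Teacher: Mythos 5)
Your setup, parity argument, and treatment of the disjoint case are sound and essentially the same as the paper's: when the two decomposing disks are disjoint, the region between them carries a tangle $B$ with $L = K \# B = B \# K$ (or symmetrically), primeness of $L$ together with the fact that the prime link $K$ is not a braid forces $B$ to be a braid (note this is the definition of primeness rather than Proposition \ref{BraidsProp}, a harmless mis-citation), and your algebra showing $B$ then commutes with both $K$ and $L$ gives conclusion (1).

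The genuine gap is the case $F \cap F' \neq \varnothing$, which is the heart of the proposition and which you explicitly leave as a ``goal'' with an ``expected'' strategy rather than an argument. The paper devotes its entire Case A / Case B analysis to exactly this: it splits on whether $D \cap E$ has at least two innermost curves in $E$ or exactly one, and on which side of $D$ the innermost pieces lie and whether the corresponding subdisks of $D$ are nested. Most configurations are eliminated by contradictions (an arc of $T$ forced to meet $E$ twice, or a punctured disk violating primeness of $K$ or $L$); only sub-case A3b and Case B4 survive, and there the mechanism producing conclusion (2) differs from the one you sketch. You propose to find the cabling annulus as the frontier of a sub-cylinder containing the arcs inside the ball bounded by an innermost disk pair $D_\alpha \cup E_\alpha$; but those arcs need not form a cable, and the paper never claims they do. Instead, the paper applies primeness of $K$ to the $n$-punctured disk built from an innermost (or outermost) piece of $E$ together with a complementary piece of $D$, concludes that this disk is isotopic to $\partial_- M^D_K$, and uses the resulting product structure to sweep the curve $\alpha_k$ (respectively $\alpha_1$) out to a cabling annulus on the $K$-side, while the splitting disk of conclusion (2) is the innermost subdisk $E_k$ (resp.\ $E_1$) of the \emph{other} decomposing disk, lying on the $L$-side. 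Without this case analysis and the product-structure argument, the dichotomy in the statement --- braid commutation versus cabling annulus plus splitting disk --- is asserted but not established.
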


\begin{proof}
Suppose $T$ is a string link in $M=D^2 \x I$ such that $T$ is equivalent to both $K \# L$ and $L \# K$ for prime $n$-string links $K$ and $L$.
Let $D$ be the decomposing disk that decomposes $T$ as $K\# L$ and let $E$ be the decomposing disk that decomposes $T$ as $L\# K$. By Lemma \ref{incompdecomp} and the definition of decomposing disk, both $D$ and $E$ are incompressible and non-boundary-parallel. Since $\partial D$ is isotopic to $\partial E$, we can isotope $E$ so that $\partial D \cap \partial E = \emptyset$ while simultaneously demanding that $D=D^2 \times \{\frac{1}{2}\}\subset M$ (see Figure \ref{TwoDecomposingDisks} for a particular example). This last condition is not strictly necessary for the proof, but it simplifies the pictures in Figures \ref{CaseA} and \ref{CaseB}.

\begin{figure}[h!]
\begin{picture}(250,117)
\put(0,0){\includegraphics[scale=.9]{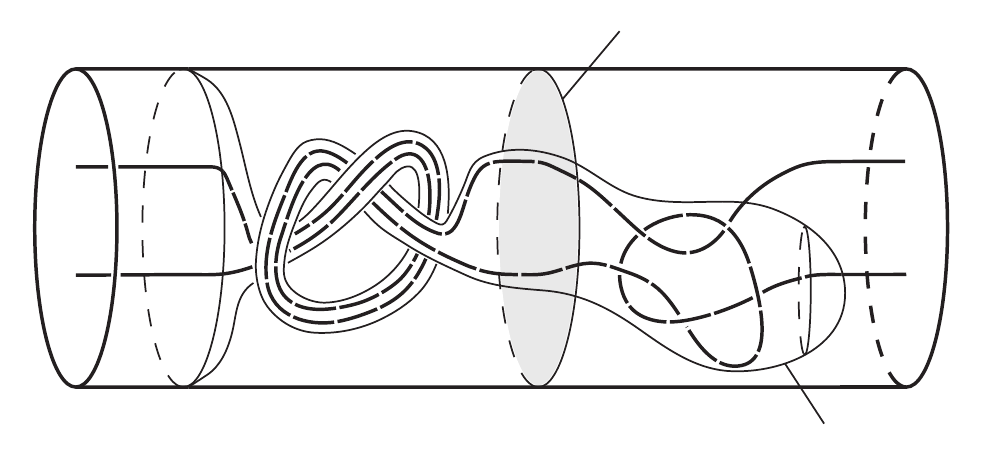}}
\put(161,108){$D$}
\put(213,0){$E$}
\end{picture}
\caption{Decomposing Disks for $K \# L$ and $L \# K$}
\label{TwoDecomposingDisks}
\end{figure}

In general, the intersection of (compact) surfaces $D$ and $E$ with $\partial D \cap \partial E = \emptyset$ is a finite collection of circles.  We will let $|D \cap E|$ denote the number of these circles of intersection.
Subject to the constraints on $D$ and $E$ in the previous paragraph, isotope $E$ to minimize $|D\cap E|$.

If $|D\cap E|=0$, then, since both $L$ and $K$ are prime, $D$ is isotopic to $E$.  Thus the part of $T$ in the region of $M$ bounded between $D$ and $E$ is a braid $B$.
This implies furthermore that $K \# B = L$ and $B \# K =L$.  (Alternatively, replacing $B$ by $B^{-1}$, $K = L \# B$ and $K = B \# L$ for some braid $B$.)  Thus we arrive at condition (1) in the Proposition statement.  In the remaining cases, we will arrive at either condition (2) or a contradiction.

Suppose $|D\cap E|>0$.
Let $\{\alpha_i\}$ be the collection of all innermost curves of $D\cap E$ in $E$ and let $D_i, E_i$ be the possibly punctured disks that $\alpha_i$ bounds in $D, E$ respectively.
Since $T$ is a string links, then $|D_i\cap T|=|E_i \cap T|$.   So if one of $\{D_i, E_i\}$ has zero punctures, then so does the other.  In that case, $D_i\cup E_i$ bounds a 3-ball, and $D_i$ can be pushed across $E_i$ to eliminate $\alpha_i$, contradicting the minimality of $|D \cap E|$.  Thus, every (innermost) component of $D\cap E$ in $E$ encloses at least one puncture.

Let $D$ separate $M$ into $M_{K}^{D}$ and $M_{L}^{D}$. Since $\alpha_i$ is innermost in $E$, then $D$ is disjoint from the interior of $E_{i}$ for each $i$ and $E_{i}$ is properly embedded in one of $M_{K}^{D}$ or $M_{L}^{D}$ for each $i$.

\noindent \textbf{Case A:}  Assume $|\{\alpha_i\}|\geq 2$. Let $E_j$ and $E_k$ be distinct innermost disks in $E$. Let $D_j$ and $D_k$ be the punctured disks in $D$ that $\alpha_j$ and $\alpha_k$ bound respectively. Since $|D\cap E|$ has been minimized, $D_j$ is not isotopic to $E_j$ and $D_k$ is not isotopic to $E_k$.
Note that since $E_j,E_k,D_j$ and $D_k$ are all subdisks of $E$ and $D$, and since both $D$ and $E$ meet every strand of $T$ exactly once, then each of $E_j,E_k,D_j$ and $D_k$ must meet each strand of $T$ at most once.

See Figure \ref{CaseA} for illustrations of the following subcases.

\noindent \textbf{Case A1:}  Suppose both $E_j$ and $E_k$ are properly embedded in $M_{L}^{D}$.
\noindent \textbf{\emph{Sub-case A1a}}: Suppose $D_j$ and $D_k$ do not intersect in $D$ then the $n$-punctured disk $H$ that is the union of $E_j$ and the annulus bounded by $\alpha_j$ and $\partial D$ in $D$ is not isotopic to $D$ since $E_j$ is not isotopic to $D_j$, and $H$ is not isotopic to $\partial_+ M_{L}^{D}$ since $E_k$ is not isotopic to $D_k$. This is a contradiction to $L$ being prime.

\begin{figure}[h!]
\begin{picture}(250,200)
\put(0,0){\includegraphics[scale=.9]{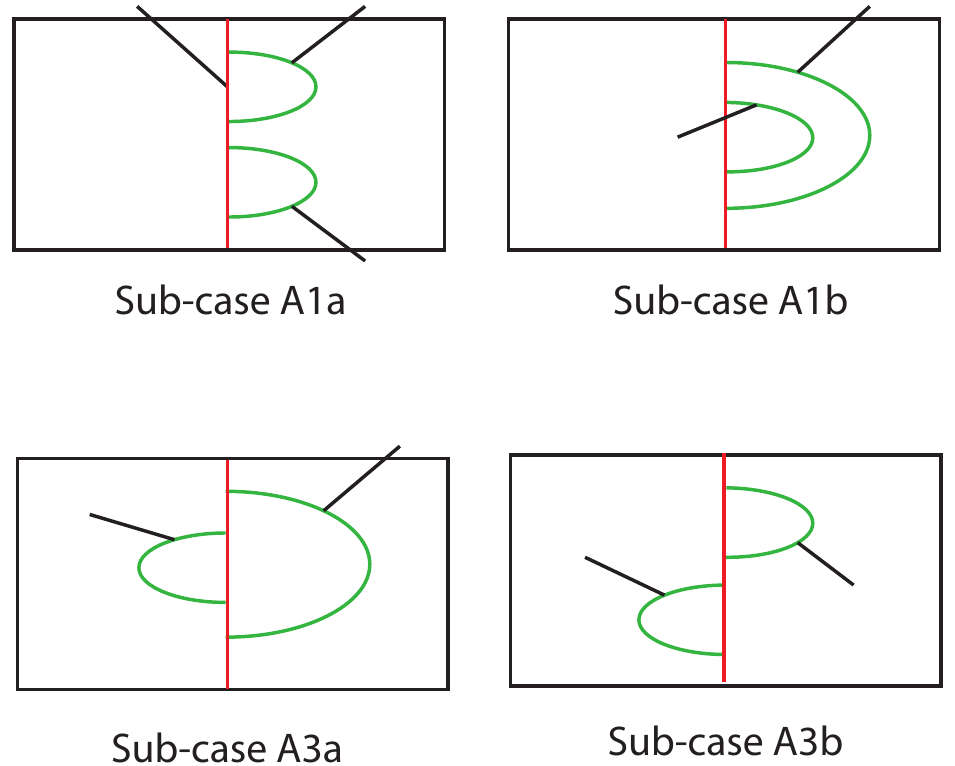}}
\put(10,155){\footnotesize$M^D_K$}
\put(95,155){\footnotesize$M^D_L$}
\put(29,198){\footnotesize$D$}
\put(93,199){\footnotesize$E_j$}
\put(93,123){\footnotesize$E_k$}
\put(225,199){\footnotesize$E_k$}
\put(166,158){\footnotesize$E_j$}
\put(11,65){\footnotesize$E_j$}
\put(99,85){\footnotesize$E_k$}
\put(221,39){\footnotesize$E_k$}
\put(140,55){\footnotesize$E_j$}
\end{picture}
\caption{Diagrammatic representations of  the subcases in Case A.  A rectangle is used to represent $D^2 \times I$; $D$ is represented by a red line, with $M^D_K$ on the left and $M^D_L$ on the right; and the subdisks $E_j$ and $E_k$ of $E$ are represented by green lines.}
\label{CaseA}
\end{figure}

\noindent \textbf{\emph{Sub-case A1b:}} Suppose $D_j$ and $D_k$ do intersect in $D$. Up to relabeling, we can assume that $D_j\subset D_k$. Notice that each of $E_j$ and $D_j$  meet each strand of $T$ at most once.  Thus every arc of $T$ contained in the 3-ball whose boundary is $D_j \cup E_j$ has one endpoint in $D_j$ and one endpoint in $E_j$.  By Alexander's Theorem, the punctured disks $E_j \cup E_k$ separate $M_{L}^{D}$ into three 3-ball components.  Examine an arc $\beta \subset L$ in the component which is incident to both $E_j$ and $E_k$ such that $\beta$ has an end point on $E_j$. There are three possibilities for the other endpoint of $\beta$: it is contained in $E_j$, $E_k$, or $D$.
If this other endpoint of $\beta$ is contained in either $E_j$ or $E_k$, then we immediately contradict that $\beta$ intersects $E$ only once.
If this other endpoint of $\beta$ is contained in $D$, then the strand of $L$ that contains $\beta$ has both endpoints on $D$, a contradiction to $L$ being a string link.
Hence, in each case we derive a contradiction.

\noindent \textbf{Case A2:} Suppose both $E_j$ and $E_k$ are properly embedded in $M_{K}^{D}$. The proof in this case is similar to the proof in Case A1.

\noindent \textbf{Case A3:} Suppose $E_j$ is properly embedded in $M_{K}^{D}$ and $E_k$ is properly embedded in $M_{L}^{D}$.

\noindent \textbf{\emph{Sub-case A3a:}} Suppose $D_j$ and $D_k$ intersect in $D$, then we can assume $D_j\subset D_k$. Let $p$ be a point of intersection between $T$ and $D_j$.  (We know such a point exists since no curve of $D\cap E$ bounds an unpunctured disk in $D$.)  The point $p$ is the endpoint of an arc $\alpha$ in $K$ and an arc $\beta$ in $L$. Since the other endpoint of $\alpha$ is contained in $\partial_- M_{K}^{D}$, then $E_j$ separates the endpoints of $\alpha$ and $\alpha \cap E_j \neq \emptyset$. Since the other endpoint of $\beta$ is contained in $\partial_+ M_{L}^{D}$, then $E_k$ separates the endpoints of $\beta$ and $\beta \cap E_k \neq \emptyset$. Hence the arc $\alpha \cup \beta$ in $T$ meets $E$ twice, a contradiction.

\noindent \textbf{\emph{Sub-case A3b:}} Suppose $D_j$ and $D_k$ do not intersect in $D$, then examine the $n$-punctured disk $D'$ that is the union of $E_j$ and the annulus in $D$ with boundary $\alpha_j \cup \partial D$.  Since $K$ is prime and $E_j$ is not isotopic to $D_j$, then $D'$ must be isotopic to $\partial_- M_{K}^{D}$ via an isotopy $\iota$.  Moreover, $\iota$ can be taken to be an embedding.  This identifies the part of $M$ between $D'$ and $\partial_- M^D_K$ with the product $D^2 \x I$ such that $K$ meets this product in $I$-fibers.
Now for any curve $\gamma$ in $D$ which is disjoint from the interior of $D_j$ and which encloses a puncture in $D$, the image of the restriction of $\iota$ to $\gamma$ is a cabling annulus in $M_{K}^{D}$ with $\gamma$ a boundary component.  Thus the restriction of $\iota$ to $\alpha_k$ has image a cabling annulus in $M_{K}^{D}$ with $\alpha_k$ as a boundary component. Hence, $E_k$ is a splitting disk for $L$ such that $\alpha_k$ bounds a cabling annulus in $M_{K}^{D}$, which is what we wanted to show.

\noindent \textbf{Case A4:} Suppose $E_j$ is properly embedded in $M_{L}^{D}$ and $E_k$ is properly embedded in $M_{K}^{D}$. This case follows from the proof of Case A3.

\noindent \textbf{Case B:} It remains to consider the case that there is a unique innermost curve $\alpha_1$ of $D\cap E$ in $E$ bounding a punctured disk $E_1$ in $E$. Let $D_1$ be the punctured disk $\alpha_1$ bounds in $D$. Since there is a unique innermost curve $\alpha_1$ of $D\cap E$ in $E$, every other curve in $D\cap E$ encloses $\alpha_1$ in $E$, and hence there is also a unique curve $\beta$ which is outermost in $E$. Let $A_E$ be the possibly punctured annulus in $E$ with boundary $\beta \cup \partial E$. Let $A_D$ be the possibly punctured annulus in $D$ with boundary $\beta \cup \partial D$. $E_1$ is not isotopic to $D_1$, and $A_E$ is not isotopic to $A_D$, as otherwise we could decrease $|D\cap E|$. As noted previously, $|E_1 \cap T|=|D_1 \cap T|$ and, similarly, $|A_E \cap T|=|A_D \cap T|$. Without loss of generality, assume $A_E\subset M_{K}^{D}$. There are several cases to consider.

See Figure \ref{CaseB} for illustrations of the following subcases.

\noindent \textbf{Case B1:} $E_1$ is contained in  $M_{K}^{D}$, and $\alpha_1$ is disjoint from $A_D$. The $n$-punctured disk $H$ in $M_{K}^{D}$ that is the union of $E_1$ and the annulus bounded by $\partial E_1$ and $\partial D$ in $D$ is not isotopic to $D$ since $E_1$ is not isotopic to $D_1$ and $H$ is not isotopic to $\partial_- M_{K}^{D}$ since $A_D$ is not isotopic to $A_E$. This is a contradiction to $K$ being prime.

\begin{figure}[h!]
\begin{picture}(250,210)
\put(0,0){\includegraphics[scale=.9]{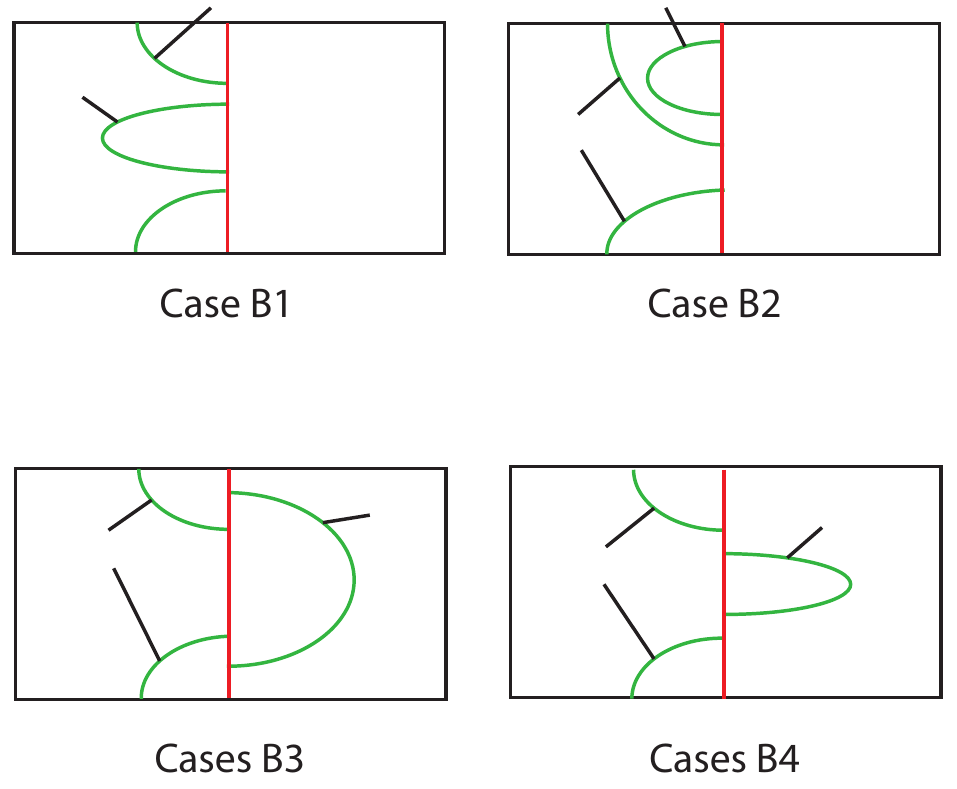}}
\put(51,205){\footnotesize$A_E$}
\put(10,179){\footnotesize$E_1$}
\put(167,205){\footnotesize$E_1$}
\put(143,169){\footnotesize$A_E$}
\put(20,60){\footnotesize$A_E$}
\put(97,68){\footnotesize$E_1$}
\put(150,57){\footnotesize$A_E$}
\put(213,69){\footnotesize$E_1$}
\end{picture}
\caption{Diagrammatic representations of  the subcases in Case B.  A rectangle is used to represent $D^2 \times I$; $D$ is represented by a red line, with $M^D_K$ on the left and $M_D^L$ on the right; and $D_1$ and $A_E$ are represented by green lines.}
\label{CaseB}
\end{figure}

\noindent \textbf{Case B2:} $E_1$ is contained in  $M_{K}^{D}$, and $\alpha_1$ is contained in $A_D$.  The annulus $A_E$ separates $M^D_K$ into two pieces. (This can be seen by doubling the 3-ball $M^D_K$ and the annulus along their boundaries and applying the Generalized Jordan Curve Theorem \cite{GP}.)  Since $E$ is embedded, $E_1$ is contained in one of these pieces, necessarily the one incident to $A_D$.  This implies that $D_1 \subset A_D$.  Then any arc of $K$ that intersects $E_1$ is forced to also intersect $A_E$, a contradiction to the fact that every arc of $T$ meets $E$ exactly once.

\noindent \textbf{Case B3:} $E_1$ is contained in  $M_{L}^{D}$, and $\alpha_1$ cannot be isotoped to be disjoint from $A_D$. Here $A_D$ and $D_1$ are forced to meet $T$ in a common puncture $p$. The arc of $K$ with endpoint $p$ must meet $A_E$ since $A_E$ separates the endpoints of this arc. The arc of $L$ with endpoint $p$ must meet $E_1$ since $E_1$ separates the endpoints of this arc. This is a contradiction to the fact that $E$ meets every arc of $T$ exactly once.

\noindent \textbf{Case B4:} $E_1$ is contained in  $M_{L}^{D}$, and $\alpha_1$ can be isotoped to be disjoint from $A_D$.
Note that $E_1$ is a splitting disk for $L$.  Since $A_E$ is not isotopic to $A_D$ and $K$ is prime, the $n$-punctured disk $A_E \cup (D-A_D)$ is isotopic to $\partial_- M_{K}^{D}$. As in the proof of Case A3b, this isotopy gives rise to a product structure on the complementary component of $A_E$ in $M_{K}^{D}$ that contains $\alpha_1$. This product structure implies $\alpha_1$ is a boundary component of a cabling annulus for $K$, which is what we wanted to show.

\end{proof}

\begin{remark}
By condition (1) of Proposition \ref{commute}, understanding when braids commute with string links would provide a more thorough understanding of the necessary conditions for commutativity of string links.  We plan to pursue this in future work.
\end{remark}

The following essentially provides the converse to the above Proposition.

\begin{proposition}\label{commutecon}
Suppose $T= K \# L$, where $K$ and $L$ are prime, $n$-string links.  Let $D$ be the decomposing disk that decomposes $T$ as $K\# L$.  If $L$ is a string link with cabling annulus $A$ and $K$ has a splitting disk $F$ such that, after forming $K \# L$, $\partial F$ is isotopic to a component of $\partial A$ in $\partial_+ M^D_K$, then $K \# L$ is braid equivalent to $L \# K$.
\end{proposition}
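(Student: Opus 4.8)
The plan is to localize the nontrivial parts of $K$ and $L$ onto the common set of ``cable strands'' picked out by the condition $\partial F \simeq \partial A$, to assemble these localized pieces into a single embedded tube $\mathbb{B} \cong D^2 \times I$ running from $\partial_- M$ to $\partial_+ M$, and then to use the fact that braids are units in order to slide the two localized tangles past one another inside $\mathbb{B}$. The identification $K\#L \sim L\#K$ up to braids will then follow from Proposition \ref{BraidEquivalenceProp}.

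First I would normalize $K$. Since $K$ is prime and $F$ is a splitting disk with $\partial F \subset \partial_+ M^D_K$, Lemma \ref{splitdiskprime} shows that the disk $G$ obtained from $F$ together with the annulus in $\partial_+ M^D_K$ between $\partial F$ and $\partial\partial_+ M^D_K$ is isotopic to $\partial_- M^D_K$. Hence the region of $M^D_K$ between $G$ and $\partial_- M^D_K$ is a product in which $K$ appears as a trivial braid, and all of the nontrivial tangling of $K$, call it $K'$, is confined to the ball $B_F$ bounded by $F$ and the subdisk $D_F$ of $\partial_+ M^D_K$ that $\partial F$ cuts off. Let $m$ be the number of punctures of $F$; these punctures are exactly the strands entering $B_F$, and I will call them the cable strands. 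Simultaneously, the cabling annulus $A$ of $L$ confines a braid $\beta = L \cap N$ to the solid tube $N$ it cobounds, and the hypothesis that $\partial F$ is isotopic to a component of $\partial A$ in $\partial_+ M^D_K$ guarantees, after isotoping $\partial F$ onto the lower boundary circle of $A$, that the $m$ strands of $\beta$ are precisely the cable strands, while the remaining $n-m$ strands $L''$ of $L$ lie outside $N$.

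Next I would build the tube. Because the cable strands are trivial and enclosable below $B_F$ (by the product structure from Lemma \ref{splitdiskprime}), tangled inside $B_F$, and braided again inside $N$, I can glue a thin straight cylinder below $B_F$, the ball $B_F$, and the tube $N$ along disks to form one embedded $\mathbb{B} \cong D^2 \times I$ reaching from $\partial_- M$ to $\partial_+ M$ and disjoint from the non-cable strands, which avoid $B_F$, $N$, and a sufficiently thin cylinder. Reading $\mathbb{B}$ from bottom to top, the cable realizes the $m$-string link $K' \# \beta$ (up to trivial braids). Since $\beta$ is a braid and braids are units, $K'\#\beta$ and $\beta\#K'$ are each braid-equivalent to $K'$, hence to each other; by Proposition \ref{BraidEquivalenceProp} there is an isotopy $\Phi_t$ of $\mathbb{B}$ fixing $(\partial D^2)\times I$ and carrying $K'\#\beta$ to $\beta\#K'$. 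Extending $\Phi_t$ by the identity outside $\mathbb{B}$, which is legitimate because $\Phi_t$ fixes the vertical boundary of $\mathbb{B}$ and only moves its end disks within $\partial_\pm M$, produces an isotopy of $M$ fixing $(\partial D^2)\times I$ and leaving $L''$ untouched.

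Finally I would identify the outcome with $L\#K$. After the slide the cable strands realize $\beta\#K'$ and the non-cable strands realize $L''$, the trivial $K$-contributions on the non-cable strands being absorbable as braids. But $L\#K$ presents in exactly the same way: its cable strands carry $\beta$, from the $L$-factor, below $K'$, from the $K$-factor, inside the tube ``$N$-then-$B_F$,'' while its non-cable strands carry $L''$. Matching these two presentations tube-for-tube and invoking Proposition \ref{BraidEquivalenceProp} yields $K\#L \sim L\#K$ up to braids. I expect the main obstacle to be precisely this last identification: making rigorous that the slid configuration and $L\#K$ agree once the possible knotting of the tube $N$ and the differing heights of $L''$ are accounted for, together with the preliminary verification that $\mathbb{B}$ can be taken embedded and disjoint from the non-cable strands, so that every isotopy used genuinely fixes $(\partial D^2)\times I$.
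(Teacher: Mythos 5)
Your proposal is correct and is essentially the paper's own argument: both proofs use Lemma \ref{splitdiskprime} to confine the nontrivial part of $K$ to the ball bounded by the splitting disk $F$, then transport it through the solid tube $N$ bounded by the cabling annulus $A$ past $L$, and conclude via braid equivalence (Proposition \ref{BraidEquivalenceProp}). The only real difference is that the paper resolves the obstacle you flag at the end by pushing that ball all the way into an $\varepsilon$-collar of $\partial_+ M$, where the tube and the non-cable strands are standard vertical products, so the resulting string link is visibly a braid stacked with (something braid-equivalent to) $L$ stacked with (something braid-equivalent to) $K$.
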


It is clear that the roles of $K,L$ can be reversed, i.e., that if $L$ has a splitting disk $F$, $K$ has a cabling annulus $A$, and $\partial F$ is isotopic to a component of $\partial A$ in $\partial_- M^D_L$, then the same conclusion holds.

\begin{proof}
Here we describe an isotopy taking $K \# L$ to $L \# K$ up to braid equivalence. An example of the splitting disk and cabling annulus that define the isotopy are depicted in Figure \ref{TwoDecomposingDisks} as the portions of $E$ on either side of $D$. Additionally, a version of this isotopy is depicted in Figure \ref{CableCommute}.

The portion of $L \# K$ in the 3-ball bounded by $F$ and a subdisk of $\partial_+ M^D_{K}$ in $M^D_{K}$ can be isotoped through the complementary component of $A$ in $M^D_L$ homeomorphic to $D^2\times I$ so that it lies inside an $\varepsilon$ neighborhood of $\partial_+ M^D_L$.  By Lemma \ref{splitdiskprime}, after this isotopy, the portion of $K \# L$ in $M^D_K$ is a braid. Additionally, since, before the isotopy, $T$ met the complementary component of $A$ in $M^D_L$ homeomorphic to $D^2\times I$ in a braid, then, after the isotopy, the portion of $K \# L$ in an $\varepsilon$ neighborhood of $\partial_+ M^D_L$ is braid equivalent to $K$. Similarly, after the isotopy, the portion of $K\# L$ in $M^D_L$ that meets the $D^2\times I$ complementary component of $A$ in $M^D_L$ outside of an $\varepsilon$ neighborhood of $\partial_+ M^D_L$ remains a braid. Simultaneously, the portion of $K\# L$ in $M^D_L$, but outside of this $D^2\times I$ complementary component of $A$ is fixed by this isotopy. Hence, $K \# L$ is braid-equivalent to $L \# K$.
\end{proof}

\begin{corollary}
Let $T$ be an $n$-string link such that $T= K\# L=P\# Q$ $K$, $L$, $P$, $Q$ be prime, $n$-string links.  Then $K$ is braid-equivalent to $P$ and $L$ is braid-equivalent to $Q$ or $K$ is braid-equivalent to $Q$ and $L$ is braid equivalent to $P$.
\end{corollary}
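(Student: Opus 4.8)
The plan is to choose decomposing disks $D$ and $E$ that decompose $T$ as $K\#L$ and as $P\#Q$ respectively, and then to compare them. By Lemma \ref{incompdecomp} both disks are incompressible, and since $\partial D$ and $\partial E$ are each isotopic to $\partial(\partial_+ M)$, I would first isotope $E$ so that $\partial D\cap\partial E=\emptyset$ and $D=D^2\times\{1/2\}$, and then isotope $E$ (transversely to $T$, respecting its boundary) to minimize the number $|D\cap E|$ of circles of intersection, exactly as at the start of the proof of Proposition \ref{commute}. The key claim is that after this minimization the two factorizations are forced to agree up to braids and order, with the two alternatives in the statement corresponding precisely to the cases $|D\cap E|=0$ and $|D\cap E|>0$.

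First I would dispose of the case $|D\cap E|=0$. Here $E$ lies entirely in $M^D_K$ or in $M^D_L$, so $D$ and $E$ cobound a slab whose intersection with $T$ is some string link $B$. Say $E\subset M^D_L$; then $P=K\#B$ and $L=B\#Q$ up to braid equivalence. Since $P$ is prime and $K$ is prime, hence not a braid, primality of $P=K\#B$ forces $B$ to be a braid, whence $K$ is braid-equivalent to $P$ and $L$ is braid-equivalent to $Q$. The sub-case $E\subset M^D_K$ is symmetric and yields the same conclusion. This gives the first alternative.

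For $|D\cap E|>0$ I would re-run the entire case analysis (Cases A1--A4 and B1--B4) from the proof of Proposition \ref{commute}. The crucial observation is that that analysis never uses primality of the factors cut out by $E$: it uses only primality of the two factors $K$ and $L$ determined by $D$, together with the string-link condition, Alexander's theorem, and the minimality of $|D\cap E|$. Consequently every subcase except the cabling/splitting subcases A3b and B4 produces a contradiction verbatim, so a minimized positive intersection forces the cabling/splitting configuration: the $M^D_L$-side factor $L$ has a splitting disk and the $M^D_K$-side factor $K$ has a cabling annulus whose boundary matches it. I would then invoke Proposition \ref{commutecon} together with Lemma \ref{splitdiskprime} to convert this configuration into the order-reversed factorization. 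Concretely, the splitting disk and the product structure coming from the cabling annulus are exactly the data defining the isotopy of Proposition \ref{commutecon}, and under that isotopy $E$ should be carried to a decomposing disk cutting $T$ into $L\#K$ up to braids; hence $P$ is braid-equivalent to $L$ and $Q$ is braid-equivalent to $K$, which is the second alternative.

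The main obstacle is this last step. While the case analysis of Proposition \ref{commute} cheaply identifies the local cabling/splitting data, that proposition's conclusion only records the existence of a cabling annulus and a matching splitting disk; it says nothing about the global factorization $P\#Q$ induced by $E$. The work is therefore to reconstruct $E$ globally from its unique innermost disk, its outermost annulus, and the product structures supplied by Lemma \ref{splitdiskprime} and the cabling annulus, and to verify that the resulting decomposition is genuinely $L\#K$ rather than some third factorization. I expect this bookkeeping---showing that $E$ realizes precisely the commuting isotopy of Proposition \ref{commutecon}---to be the technical heart of the argument.
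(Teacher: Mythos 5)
Your proposal matches the paper's proof essentially step for step: the disjoint case ($|D\cap E|=0$) gives the first alternative via primality, and when the disks must intersect, the paper likewise re-runs the case analysis of Proposition \ref{commute} (all subcases yielding contradictions except A3b and B4) and then applies the proof of Proposition \ref{commutecon} to the resulting splitting-disk/cabling-annulus data to obtain the swapped pairing $K \sim Q$, $L \sim P$. The final ``bookkeeping'' step you flag as the technical heart is treated no less tersely in the paper itself, which simply asserts that applying the proof of Proposition \ref{commutecon} yields the conclusion.
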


\begin{proof}
This follows an argument very similar to the proof of Proposition \ref{commute} and the proof of Proposition \ref{commutecon}.  Let $D$ be disk that decomposes $T$ as $K\# L$, and let $E$ be the disk that decomposes $T$ as $P\# Q$. If $D$ can be isotoped to be disjoint from $E$, then by Proposition \ref{BraidEquivalenceProp}, $K$ is braid-equivalent to $P$ and $L$ is braid equivalent to $Q$. If $D$ can not be isotoped to be disjoint from $E$, then we arrive at a contradiction exactly as in the proof of Proposition \ref{commute}, except for case B4 and sub-case A3b, which require only slight modification, as follows.

Recall the terminology from the proof of Proposition \ref{commute}. In case B4, $E_1$ is contained in  $M_{L}^{D}$, and $\alpha_1$ can be isotoped to be disjoint from $A_D$. By the argument presented in Proposition \ref{commute}, we can conclude that $\alpha_1$ is simultaneously a boundary component of a cabling annulus for $K$ and the boundary of a splitting disk for $L$.  After applying the proof of Proposition \ref{commutecon}, we see that this implies that $K$ is braid-equivalent to $Q$ and $L$ is braid equivalent to $P$.

In the second part of case A3, $E_j$ is properly embedded in $M_{K}^{D}$, $E_k$ is properly embedded in $M_{L}^{D}$, and $D_j$ and $D_k$ do not intersect in $D$. We can conclude that $E_k$ is a splitting disk for $L$ such that $\partial E_k$ bounds a cabling annulus in $M_{K}^{D}$. After applying the proof of Proposition \ref{commutecon} with the roles of $K$ and $L$ switched, we see that this implies that $K$ is braid-equivalent to $Q$ and $L$ is braid equivalent to $P$.
\end{proof}

The previous corollary immediately implies the following one.  Although for arbitrary $n$, the monoid structure does not respect braid-equivalence, this statement says roughly that ``string links modulo braid equivalence are cancellative."

\begin{corollary}
\label{cancellative}
Let $K$, $L$, $P$, $Q$ be prime, $n$-string links. $K \# L = K \# Q$ implies $L$ is braid-equivalent to $Q$, and $K \# L = P \# L$ implies $K$ is braid-equivalent to $P$.
\qed
\end{corollary}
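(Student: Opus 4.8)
The plan is to obtain both cancellation statements as immediate consequences of the previous corollary, whose dichotomy already does all of the geometric work. The one auxiliary fact I would record first is that braid-equivalence is an equivalence relation. Reflexivity is clear from stacking with a trivial braid; symmetry holds because braids form a group under stacking, so a relation $T_1 = B_1 \# T_2 \# B_2$ can be rewritten as $T_2 = B_1^{-1} \# T_1 \# B_2^{-1}$; and transitivity holds because the stacking of two braids is again a braid, so two braid-equivalences compose to a single one. This last property is precisely what lets the ``crossed'' outcome of the previous corollary be reduced to the desired one.

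For the first statement, I would view the hypothesis $K \# L = K \# Q$ as an instance of the previous corollary with $T = K \# L = P \# Q$ and $P := K$. The corollary then gives two alternatives. In the first, $L$ is braid-equivalent to $Q$, which is exactly the desired conclusion. In the second, $K$ is braid-equivalent to $Q$ while $L$ is braid-equivalent to $P = K$; transitivity of braid-equivalence then yields that $L$ is braid-equivalent to $Q$ as well, so the conclusion holds in either case. The second statement is entirely symmetric: I would write $K \# L = P \# L$ as $T = K \# L = P \# Q$ with $Q := L$, apply the previous corollary, and find that one alternative gives $K$ braid-equivalent to $P$ directly while the other gives $K$ braid-equivalent to $L$ and $L$ braid-equivalent to $P$, which again closes up by transitivity.

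The only obstacle worth noting is that the crossed case of the dichotomy does not literally assert the conclusion one wants, so it must be collapsed using the equivalence-relation properties of braid-equivalence recorded at the outset. Because braids form a group under stacking, verifying those properties is routine, so no genuine difficulty arises and the corollary follows in a few lines.
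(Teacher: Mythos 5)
Your proposal is correct and is essentially the paper's own argument: the paper gives no separate proof, stating only that the preceding corollary "immediately implies" this one, and your write-up simply fills in the routine details (the dichotomy from that corollary plus the fact that braid-equivalence is an equivalence relation, so the crossed case collapses by transitivity).
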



\begin{question}
Can the conclusions in Corollary \ref{cancellative} be improved to ``$L$ is isotopic to $Q$" and ``$K$ is isotopic to $P$"?
\end{question}

By the final result of this paper, Corollary \ref{cor:main}, the answer is affirmative for $n=2$.

We can improve slightly on Corollary \ref{cancellative} in the case that $Q$ (or $P$) is itself a braid.  Recall that a \emph{link-homotopy} of a string link $K$ is a homotopy where distinct strands of $K$ never intersect.  In other words, a link-homotopy is like an isotopy, except that each strand may intersect itself.

\begin{proposition}
Let $K$ be any string link.  Suppose $K \# B = K \# B'$ for braids $B, B'$.  Then $B$ is link-homotopic to $B'$.  (Similarly, $B \# K = B' \# K$ implies that $B,B'$ are link-homotopic.)
\end{proposition}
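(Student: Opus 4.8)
The plan is to pass to the quotient of string links by link-homotopy, where stacking becomes a \emph{group} operation, so that the arbitrary factor $K$ can simply be cancelled.

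First I would record two formal facts about link-homotopy; throughout, write $A\sim A'$ to mean that $A$ and $A'$ are link-homotopic. (i) An ambient isotopy fixing $\partial M$ is in particular a link-homotopy, since distinct strands remain disjoint throughout; hence the hypothesis $K\# B = K\# B'$ gives $K\# B \sim K\# B'$. (ii) Stacking is compatible with link-homotopy: if $A\sim A'$ and $C\sim C'$, then running the two link-homotopies simultaneously on the two halves of $M$ (they agree on the common middle disk, which is fixed, and are rescaled in the $I$-direction exactly as in the definition of $\#$) exhibits $A\# C \sim A'\# C'$. Thus stacking descends to a well-defined monoid operation on the set $H(n)$ of link-homotopy classes of $n$-string links, with identity the class of the trivial braid; I write $[\,\cdot\,]$ for the link-homotopy class.

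The key input is the theorem of Habegger and Lin that $H(n)$ is in fact a \emph{group}: every $n$-string link admits a link-homotopy inverse under stacking. Granting this, the proposition is immediate. From (i) and (ii) we obtain the equation $[K]\,[B] = [K\# B] = [K\# B'] = [K]\,[B']$ in $H(n)$, and cancelling the now-invertible element $[K]$ on the left yields $[B]=[B']$, i.e.\ $B\sim B'$. The statement $B\# K = B'\# K \Rightarrow B\sim B'$ follows identically, cancelling $[K]$ on the right.

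It is worth isolating where the hypotheses enter, and this also locates the main obstacle. The Habegger--Lin group structure makes the braid hypothesis on $B,B'$ inessential to the argument itself; what the hypothesis buys is that one need not invoke the full strength of that theorem for $B$ and $B'$. Indeed, since a braid has an honest (isotopy) inverse $B^{-1}$ with $B\# B^{-1}=\mathbf 1$, stacking $B^{-1}$ on the right of $K\# B = K\# B'$ reduces the claim to the special case $K = K\# C$ with $C:=B'\# B^{-1}$ a braid, and $B\sim B'$ is then equivalent to $C$ being link-homotopically trivial. So the only genuine content is the cancellation of the single arbitrary factor $[K]$. This cancellation is exactly the hard point: for an arbitrary string link $K$ one cannot avoid invoking that link-homotopy classes form a group (equivalently, that $H(n)$ is cancellative). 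Pairwise linking numbers are additive under stacking and so immediately force every $\mathrm{lk}_{ij}(C)=0$ and $C$ to be pure, but they do not detect the higher Milnor link-homotopy invariants of $C$, whose vanishing is precisely what the group structure of $H(n)$ supplies.
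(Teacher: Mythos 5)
Your proof is correct and takes essentially the same route as the paper: both arguments invoke Habegger--Lin's link-homotopy classification of string links to cancel the class of $K$ in a group and conclude $B\sim B'$ (and both, as you note, never actually use that $B,B'$ are braids). The only cosmetic difference is where the cancellation happens --- the paper cancels in $\mathrm{Aut}(RF(n))$ via the homomorphism $\varphi$, which is injective modulo link-homotopy, while you cancel directly in the group of link-homotopy classes; these are two phrasings of the same Habegger--Lin input.
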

\begin{proof}
In \cite[pp. 394-397]{HL}, Habegger and Lin showed that there is a homomorphism $\varphi$ from the monoid of $n$-string links to the group of automorphisms of $RF(n)$, the ``reduced" free group on $n$ letters.  This map factors through the monoid of string links up to link-homotopy, on which it is an isomorphism \cite[Theorem 1.7]{HL}.  From $\varphi(K) \varphi(B) = \varphi(K) \varphi(B')$ and the fact this equality is in a group, we conclude that $\varphi(B)=\varphi(B')$.  Since $\varphi$ is an isomorphism on the monoid of string links up to link-homotopy, the desired result follows.
\end{proof}

In particular, the only braids which fix a string link are those which are link-homotopic to the trivial braid.

\subsection{Consequences for the 2-string link monoid}

\begin{definition}
An $n$-string link $T$ with a non-boundary-parallel cabling annulus such that all the strands of $T$ are contained in a complementary component of the annulus is called a \emph{one-strand cable}.  By the definition of a cabling annulus (Definition \ref{CablingAnnulusDefn}), this component is necessarily the one homeomorphic to $D^2 \x I$.
\end{definition}

\begin{corollary}
\label{cor:ifcentral}
If $K\subset M\cong D^2\times I$ is a prime element in the center of the monoid of $n$-string links, then either $K$ is a one-strand cable or $M$ contains an essential 2-punctured sphere.
\end{corollary}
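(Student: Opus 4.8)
The plan is to feed centrality into Proposition \ref{commute}. Since $K$ is prime it is not a braid, and I may assume $K$ is not a one-strand cable, the goal being to produce an essential $2$-punctured sphere. Centrality means $K\# L = L\# K$ for \emph{every} prime $n$-string link $L$, so for each such $L$ Proposition \ref{commute} forces alternative (1) or (2). Alternative (1) asserts that $K$ is braid-equivalent to $L$; since there are infinitely many braid-equivalence classes of prime string links (for example, split links carrying distinct prime local knot types on a single strand, which one checks are prime because the trivial piece of any decomposing disk is forced to be a trivial braid), alternative (1) must fail for a large family of partners $L$, and for every such $L$ alternative (2) holds. Thus, up to relabeling, for each $L$ in this family either $K$ carries a cabling annulus or $K$ carries a splitting disk, with its boundary prescribed by a cabling datum of $L$.

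I would then analyze the two structural outcomes and in each case build a sphere. If $K$ has a cabling annulus $A$ bounding a region $N\cong D^2\times I$ in which $T\cap N$ is a braid, then when every strand of $K$ lies in $N$ and $A$ is non-boundary-parallel, $K$ is by definition a one-strand cable and we are done; so I may assume only some strands thread $N$. Capping $A$ by the two subdisks of $\partial_\pm M$ that it bounds and pushing the caps slightly into the interior gives a sphere transverse to $T$ that meets it in twice as many points as there are strands threading $N$. If instead $K$ has a splitting disk $F$, then by Lemma \ref{splitdiskprime} the strands met by $F$ lie in a ball disjoint from the remaining strands; since $K$ is prime and not a braid, this split-off piece must carry nontrivial knotting, and the boundary of a regular neighborhood of a knotted sub-arc again caps off to a sphere meeting $T$.

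The crux is to pin the number of intersection points to exactly two, and this is precisely where the full strength of centrality enters: rather than a single partner, I would play $K$ against the entire family of one-strand cables whose cabling annuli enclose a single strand. The matching-of-boundaries clause in alternative (2) then forces the relevant curve on $\partial_\pm M$ to bound a once-punctured disk, so that exactly one strand threads $N$ (respectively $F$ has a single puncture). With a single strand involved, the capped surface $S$ is a genuine $2$-punctured sphere, and it remains to check that $S$ is essential. Non-boundary-parallelism follows from that of $A$ (respectively from the knottedness of the split-off strand): were $S$ boundary-parallel, the enclosed arc would be a trivial product and the annulus, or the local knot, would be inessential. Incompressibility I would argue as in Lemma \ref{incompdecomp}: a compressing disk for $S$ is disjoint from $T$ and its boundary separates the two punctures of $S$, so by irreducibility of the ball $M$ it cuts off a sub-arc of the single strand, and pushing across it would contradict either the knottedness of that strand or the primeness of $K$.

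I expect the puncture count to be the main obstacle. Proposition \ref{commute} only guarantees that \emph{some} cabling annulus of one factor shares a boundary curve with \emph{some} splitting disk of the other, and converting this into control over how many strands thread the distinguished region of $K$ requires extracting uniform information from a whole family of partners rather than from a single $L$. A secondary bookkeeping point is that alternative (2) is symmetric in $K$ and $L$, so both the ``cabling annulus on $K$'' and ``splitting disk on $K$'' readings must be carried through; and one must confirm that any $K$ which is merely braid-equivalent to a one-strand cable is itself a one-strand cable, so that the residual instances of alternative (1) are absorbed into the first conclusion rather than forming a separate case.
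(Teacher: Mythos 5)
Your overall skeleton does match the paper's: feed centrality into Proposition \ref{commute}, dispose of alternative (1) by playing $K$ against a family of pairwise non-braid-equivalent primes, and obtain the sphere by capping a 1-punctured splitting disk of $K$ with the once-punctured disk its boundary bounds in $\partial M$. However, the step you yourself flag as ``the main obstacle'' is a genuine gap, and the partners you propose cannot close it. The problem is the ``up to relabeling'' clause in alternative (2): it only asserts that \emph{some} cabling annulus on one factor has boundary matching \emph{some} splitting disk on the other. Every string link --- in particular $K$ itself --- has a cabling annulus around each single strand (the boundary of a regular neighborhood), so alternative (2) is satisfied with \emph{no} constraint on $K$ as soon as the partner $L$ carries a 1-punctured splitting disk of its own. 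Your first family, split links with local knots tied in one strand, has exactly such splitting disks, so for those partners (2) holds with the annulus on $K$'s side being a mere strand-neighborhood and the disk on $L$'s side; nothing about $K$ is learned. Your second family, ``one-strand cables whose cabling annuli enclose a single strand,'' is not a coherent notion --- for $n\ge 2$ the defining annulus of a one-strand cable encloses \emph{all} strands --- and a one-strand cable also carries an $n$-punctured splitting disk (its cabling annulus capped off at one end), so again (2) is satisfied vacuously.

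What the paper does instead is use a single test partner $J$ with \emph{two} simultaneous properties: its only cabling annulus is the boundary of a regular neighborhood of one strand, \emph{and} it admits no nontrivial splitting disks (every prime non-braid link has the trivial $n$-punctured one parallel to $\partial_+ M$, so ``nontrivial'' is the operative word). With such a $J$, both readings of alternative (2) bite: if the splitting disk lies on $J$'s side it must be the trivial $n$-punctured one, forcing $K$'s matching cabling annulus to enclose all $n$ strands, so that $K$ is a one-strand cable (it cannot be a braid, being prime); if the cabling annulus lies on $J$'s side it is the single-strand one, and a parity count then forces $K$'s matching splitting disk to be exactly 1-punctured, which caps off to the essential 2-punctured sphere. (Your treatment of alternative (1) via a family of non-braid-equivalent test links is sound --- indeed more careful than the paper's sketch --- but the family members must all have the two properties above.) Finally, your fallback construction in the cabling-annulus case is independently flawed: capping a non-boundary-parallel annulus around a single strand need not give an essential sphere. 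For the clasp, the neighborhood annulus of either strand is non-boundary-parallel because of the linking, yet the capped sphere cuts off an unknotted arc. In the correct argument that case never produces a sphere at all; it produces the one-strand-cable alternative of the conclusion.
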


\begin{proof}
Sketch: It is easy to show that for any $n>1$, there exists an $n$-string link $K$ such that the only cabling annulus for $K$ is the boundary of a regular neighborhood of a single strand of $K$ and $K$ admits no nontrivial splitting disks. An example of such a string link when $n=2$ is given in the upper left hand side of Figure \ref{StringlinkStacking}. Hence, by the Proposition \ref{commute}, the only $n$-string links that can commute with $K$ are one-strand cables or string links with a 1-punctured splitting disk. However, a $n$-string link $L$ in $M$ with a 1-punctured splitting disk $D$ naturally contains an essential 2-punctured sphere which is the union of $D$ and the 1-punctured disk that $\partial D$ bounds in $\partial M$.
\end{proof}

\begin{corollary}\label{cor:central}
The prime elements of the center of the monoid of pure $2$-string links consist of

\begin{enumerate}
\item prime split links
\item prime one-strand cables
\end{enumerate}
\end{corollary}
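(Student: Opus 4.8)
The statement is an equality of two sets, so the plan is to prove both inclusions: that every prime central pure $2$-string link is a prime split link or a prime one-strand cable, and conversely that each of these is central. The forward inclusion will rest almost entirely on Corollary \ref{cor:ifcentral}, while the reverse one requires producing explicit commuting isotopies.

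For the forward inclusion, let $K$ be a prime element of the center. By Corollary \ref{cor:ifcentral}, either $K$ is a one-strand cable---in which case we land in case (2)---or $M$ contains an essential $2$-punctured sphere, and inspection of the proof of that corollary shows this sphere arises from a $1$-punctured splitting disk $F$ for $K$, say with $\partial F \subset \partial_- M$. First I would pin down the combinatorics. Since $F$ is $1$-punctured, exactly one strand, say $s_1$, meets $F$. Capping $F$ with the subdisk $D_0$ of $\partial_- M$ that $\partial F$ bounds gives the sphere, and a parity/separation count shows $s_1$ must have its endpoint inside $D_0$ (otherwise it would cross the sphere an odd number of times with both endpoints outside the ball it bounds, which is impossible); this already accounts for two punctures, forcing the endpoint of $s_2$ into the complementary annulus so that $s_2$ misses the ball $B$. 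Then I would invoke Lemma \ref{splitdiskprime}: because $K$ is prime, the $2$-punctured disk $D$ formed from $F$ and the boundary annulus of $\partial_- M$ is isotopic to $\partial_+ M$. Hence $D$ cuts $M$ into the ball $B$, which meets $K$ in a single arc of $s_1$, and a product collar in which $K$ is a $2$-braid. Combining a disk in $B$ cutting off that arc with a disk splitting the braided collar produces a disk separating $s_1$ from $s_2$, exhibiting $K$ as split (Definition \ref{SplitLink}); this is case (1).

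For the reverse inclusion I would treat the two families separately, establishing centrality by commuting with each prime factor of an arbitrary partner and extending multiplicatively (all up to braid equivalence, which is the appropriate notion here). If $S$ is a prime split link, then by Definition \ref{SplitLink} it carries a $1$-punctured splitting disk $F$ isolating one strand, and every $2$-string link $L$ carries a cabling annulus $A$, namely the boundary of a regular neighborhood of the corresponding strand (as noted after Definition \ref{CablingAnnulusDefn}). After arranging $\partial F$ to be isotopic to a component of $\partial A$ in the dividing disk, Proposition \ref{commutecon} gives $S \# L = L \# S$ up to braid equivalence, so $S$ is central. If $C$ is a prime one-strand cable, all strands of $C$ lie in a subcylinder $N \cong D^2 \times I$ with $C \cap N$ a braid; note that $C$ has no nontrivial $1$-punctured splitting disk (its strands are cabled together), so Proposition \ref{commutecon} does not apply symmetrically, and here I would instead construct the commuting isotopy directly, sliding $N$ together with its contents from the bottom to the top of $C \# L$ along the strands of $L$, letting the disk cross-section of $N$ engulf both strands of $L$ at every level.

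The hard part will be this last isotopy. I must check that $N$ can indeed sweep past $L$---that its cross-section can contain both strands of $L$ at each height even when those strands wind around one another---and, more delicately, that transporting the braid pattern inside $N$ from bottom to top returns it to itself only up to a braid, so that $C \# L = L \# C$ holds up to braid equivalence (which, for pure $2$-string links, differs from equality only by central full twists). The puncture-counting step in the forward direction, needed to guarantee that both punctures of the capped sphere lie on a single strand before Lemma \ref{splitdiskprime} can be applied, is the other place demanding care.
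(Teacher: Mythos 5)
Your forward direction and your treatment of split links track the paper's own proof: the paper also deduces the forward inclusion from Corollary \ref{cor:ifcentral} (your parity count and appeal to Lemma \ref{splitdiskprime} merely fill in details the paper leaves implicit), and it also obtains centrality of split links from Proposition \ref{commutecon}, with the split factor supplying the $1$-punctured splitting disk and the partner supplying the cabling annulus around the matching strand, as in Figure \ref{SplitCommute}.

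The gap is in the one-strand cable case. Your assertion that Proposition \ref{commutecon} ``does not apply symmetrically'' comes from considering only one assignment of roles. You are right that a one-strand cable $C$ has no nontrivial $1$-punctured splitting disk, but the proposition does not ask $C$ for one: the paper applies it with the roles reversed (as allowed in the remark immediately after its statement), letting $C$ supply the \emph{cabling annulus}---its defining annulus $A$, whose boundary circle on the decomposing disk encloses both punctures---and letting the arbitrary prime partner $P$ supply the \emph{splitting disk}. Such a $2$-punctured splitting disk exists for every non-braid partner: take a parallel copy of the boundary disk of $P$'s half-cylinder far from the decomposing disk, pushed in so that together with a subdisk of the decomposing disk it bounds a ball containing all of $P$; its boundary is isotopic on the decomposing disk to the relevant component of $\partial A$, and it fails to be transversely isotopic into the decomposing disk precisely because $P$ is not a braid. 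This is what Figure \ref{CableCommute} depicts, and it settles the cable case with no new isotopy. Your substitute---sliding the knotted tube $N$ through $L$ with its cross-section engulfing both strands---is a sound swallow-follow idea and could be completed (the worry about the braid pattern returning only ``up to a braid'' is handled by centrality of pure braids, which follows from Proposition \ref{prop:LL}, plus a linking-number count showing the leftover braid is trivial), but as written you explicitly leave its crux unverified, and the stated reason for needing it is mistaken.
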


\begin{proof}
By Corollary \ref{cor:ifcentral}, the prime central elements are contained in the set of prime one-strand cables and prime split string links. Conversely, Proposition \ref{commutecon} applied to the case of 2-string links implies that one-strand cables and split string links are central, as illustrated in Figure \ref{SplitCommute} and Figure \ref{CableCommute}.
\end{proof}

\begin{figure}[h!]
\begin{picture}(315,160)
\put(5,10){\includegraphics[scale=.7]{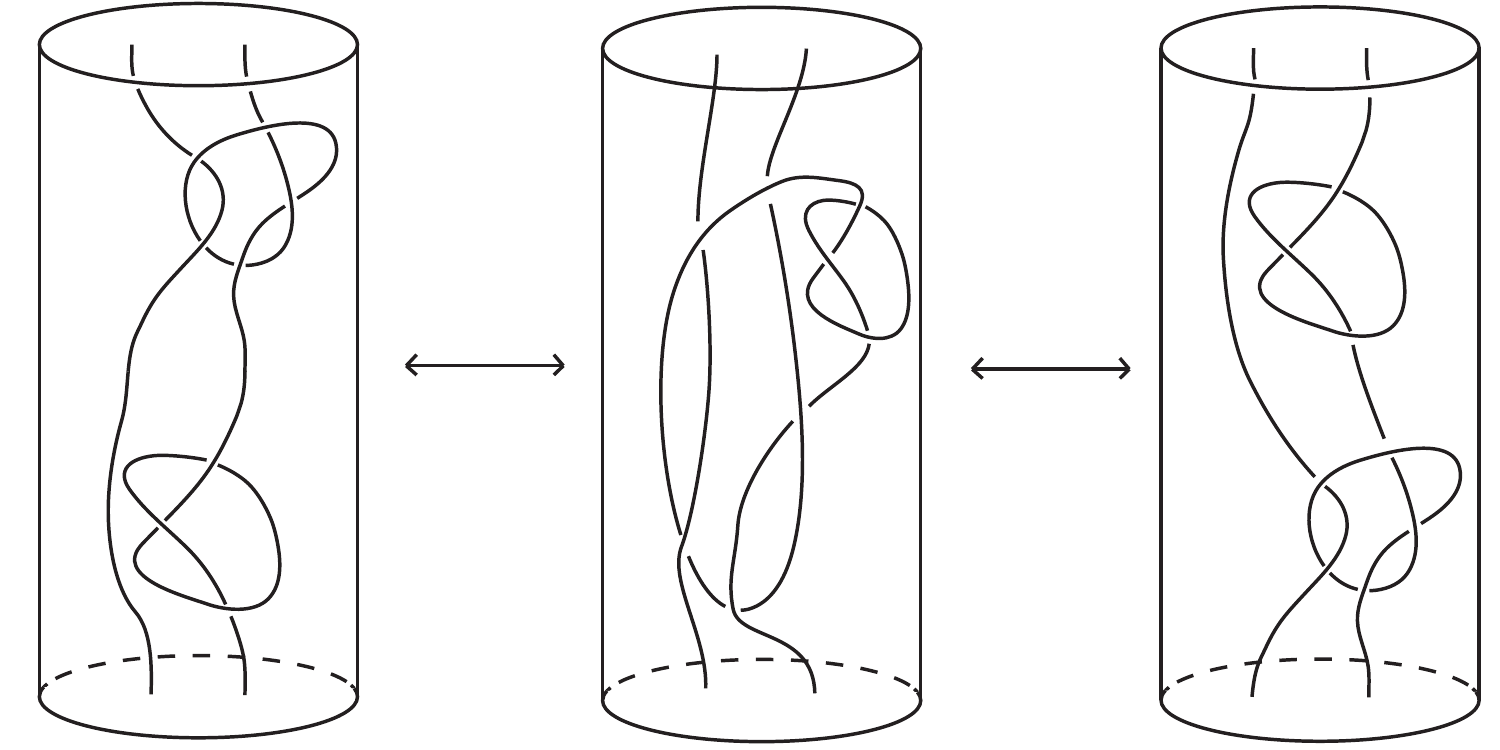}}
\put(29,0){$T_1 \# T_2$}
\put(259,0){$T_2 \# T_1$}
\end{picture}
\caption{Commuting Split links}
\label{SplitCommute}
\end{figure}

\begin{figure}[h!]
\begin{picture}(315,160)
\put(5,10){\includegraphics[scale=.7]{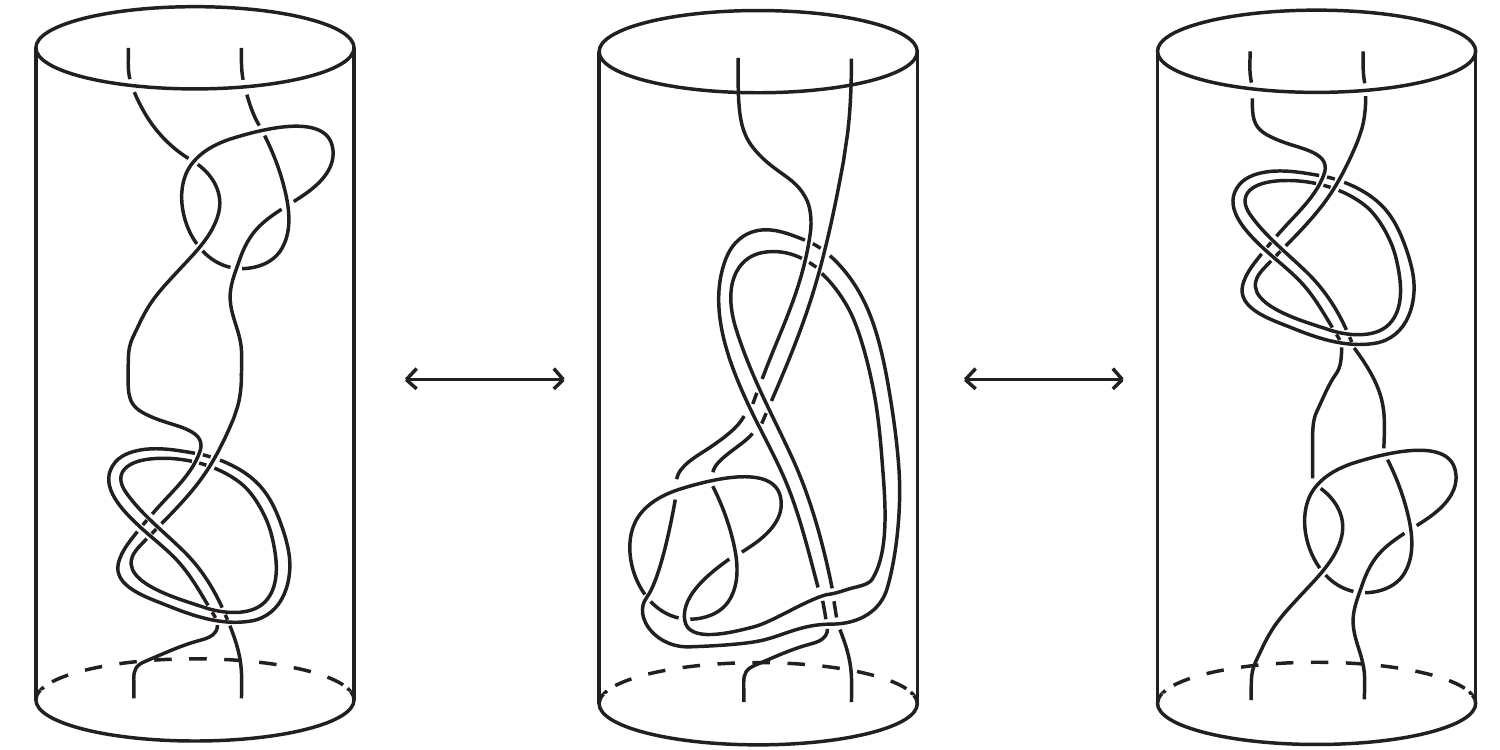}}
\put(29,0){$T_1 \# T_2$}
\put(259,0){$T_2 \# T_1$}
\end{picture}
\caption{Commuting one-strand cable}
\label{CableCommute}
\end{figure}

We now show that the monoid of all 2-string links splits as braids and the remaining quotient by braids.  Let $\LL_2$ denote the monoid of (isotopy classes of) 2-string links.

\begin{proposition}\label{prop:LL}
There is a splitting $\LL_2 \cong \LL^0_2 \oplus \Z$, where the $\Z$ factor corresponds to the braids in $\LL_2$.
\end{proposition}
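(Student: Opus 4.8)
The plan is to exhibit an explicit $\Z$-valued homomorphism out of $\LL_2$ that detects exactly the braiding, show that the braid generator is central, and then split off the corresponding $\Z$ as a direct factor. First I would build a monoid homomorphism $e \co \LL_2 \to \Z$ by taking a generic diagram of a $2$-string link and setting $e(L)$ to be the sum of the signs of the crossings between the two \emph{distinct} strands, ignoring self-crossings. Relative to the fixed endpoints, a quick check of the Reidemeister moves shows this is a well-defined isotopy invariant: $R1$ only creates or destroys self-crossings; $R2$ adds or removes either a canceling pair of inter-strand crossings or a pair of self-crossings; and $R3$ preserves the sign of each crossing as well as the pair of strands meeting at it. Since stacking two diagrams takes the disjoint union of their inter-strand crossings, $e(K \# L) = e(K) + e(L)$, so $e$ is a homomorphism. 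The elementary half-twist $\sigma$ has $e(\sigma) = 1$, so $e$ is surjective and restricts to an isomorphism from the braid subgroup $B_2 = \langle \sigma \rangle \iso \Z$ onto $\Z$ (a $2$-braid $\sigma^k$ satisfies $e(\sigma^k) = k$ and is trivial iff $k = 0$).

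Set $\LL^0_2 := \ker e$, a submonoid since $e$ is a homomorphism. The decomposition I am aiming for is the internal direct product $L \longmapsto (L \# \sigma^{-e(L)},\, e(L))$, with proposed inverse $(L', k) \mapsto L' \# \sigma^{k}$. For the first coordinate to land in $\LL^0_2$ and for the assignment to be multiplicative, the key algebraic input is that $\sigma$ is \emph{central} in $\LL_2$: granting this, $L \# \sigma^{-e(L)}$ has $e$-value $0$, and for a product one computes $(K \# \sigma^{-e(K)}) \# (L \# \sigma^{-e(L)}) = (K \# L) \# \sigma^{-e(K)-e(L)}$ by moving the central factor $\sigma^{-e(K)}$ past $L$. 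The two maps are then mutually inverse monoid homomorphisms (using $e(L' \# \sigma^k) = k$ for $L' \in \LL^0_2$), yielding the isomorphism $\LL_2 \iso \LL^0_2 \oplus \Z$ in which the $\Z$ is the image $\{(1,k)\}$ of the braids.

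The main obstacle is proving that the braid generator is central, i.e.\ that $\sigma \# L \iso L \# \sigma$ for every $2$-string link $L$. I would exploit the feature special to two strands that rotation of $D^2$ by $\pi$ is a symmetry of the pair of marked points, interchanging them, so that the half-twist is realized by a global screwing motion rather than by a localized crossing. Concretely, I would consider the screw diffeomorphism $\Phi \co D^2 \x I \to D^2 \x I$, $\Phi(x,t) = (\rho_{\pi t}(x), t)$, where $\rho_\theta$ denotes rotation by $\theta$; it carries the trivial string link to $\sigma$ and, more generally, should let one transport a half-twist from the bottom collar to the top collar of $M$ along the strands. Combined with the standard fact that the full twist $\sigma^2$ is a Dehn twist about a boundary-parallel curve—hence supported in a collar of $\partial D^2$ and freely slidable past any $L$—this is meant to upgrade to a genuine isotopy rel $\partial M$ interchanging $\sigma \# L$ and $L \# \sigma$.

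I expect the delicate point to be the boundary bookkeeping. The naive rigid rotation does not fix $\partial D^2 \x I$ (nor $\partial_+ M$), so the screw isotopy must either be damped to the identity near $\partial D^2$, which turns the full twist into the collar-supported boundary Dehn twist but leaves the half-twist genuinely interior, or else the residual boundary screw must be absorbed using the half-twists already present in $\sigma \# L$ and $L \# \sigma$. To see that the leftover correction is isotopically trivial I would invoke the fact (Cerf) that a diffeomorphism of $M \iso D^3$ fixing $\partial M$ is isotopic rel $\partial M$ to the identity, so that any boundary-fixing ambient diffeomorphism acts trivially on string links; verifying that the correction after the screw is of this form is, I anticipate, where the real work lies. (As a sanity check, centrality already holds up to link-homotopy, since the link-homotopy monoid of $2$-string links is abelian, detected by the linking number; the content of the claim is promoting this to honest isotopy.)
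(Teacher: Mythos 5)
Your first half coincides with the paper's proof: your crossing-sign invariant $e$ is exactly the paper's homomorphism $2\ell$ (the paper defines it via linking numbers plus a stack-with-$\tau$ trick for non-pure links; your diagrammatic definition is the same map, with $e(\sigma^k)=k$), and your map $L \mapsto (L \# \sigma^{-e(L)},\, e(L))$ is precisely the splitting the paper asserts. You have also put your finger on the real issue: for that bijection to be a monoid isomorphism one needs the generator $\sigma$ of the braid subgroup to commute with everything, a condition the paper silently needs when it passes from ``right-split exact sequence'' to ``$\oplus$.''

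But the step you defer as ``where the real work lies'' is not merely delicate --- it is false. The half twist $\sigma$ is \emph{not} central in $\LL_2$. Take $L$ to be the pure $2$-string link with a trefoil tied in the strand from $(x_1,0)$ to $(x_1,1)$, the other strand trivial and split from it (so $e(L)=0$). In $\sigma \# L$ the trefoil sits on the component with endpoints $(x_2,0)$ and $(x_1,1)$, while in $L \# \sigma$ it sits on the component with endpoints $(x_1,0)$ and $(x_2,1)$. String-link equivalence is isotopy rel $\partial M$, so it matches up components with the same (fixed) endpoints and preserves the knot type of each component; hence $\sigma \# L \neq L \# \sigma$. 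Your link-homotopy sanity check cannot see this, because link-homotopy lets a strand cross itself and so kills the knotting of individual components. Geometrically, conjugation by $\sigma$ is the ``rotate $D^2$ by $\pi$'' involution that interchanges the roles of the two strands, which is nontrivial on isotopy classes; your screw argument, once damped to respect the boundary, proves centrality only of the full twist $\sigma^2$ (the belt trick), which is exactly why the paper's subsequent remark claims centrality only for \emph{pure} braids. Consequently your map $L \mapsto (L\#\sigma^{-e(L)}, e(L))$ is a bijection but not a homomorphism, and no boundary-fixing ``correction'' to the $\pi$-screw can exist, since the statement it would establish is false: what one actually gets from the retraction $e$ and the section $n \mapsto \sigma^n$ is a semidirect product $\ker(e) \rtimes \Z$, with $\Z$ acting by the strand swap. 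Note this is a defect of the proposition for non-pure links, not just of your write-up --- the paper's own one-line deduction needs the same commutation --- but the intended argument (yours and the paper's) does go through verbatim on the submonoid of pure $2$-string links, where the braids are the powers of the central full twist.
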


\begin{proof}
On the submonoid $\LL^{pure}_{2} \subset \LL_2$ of pure string links, we have a map $2\ell: \LL^{pure}_{2} \to \Z$, given by twice the linking number.  We can extend this to all string links as follows:
Let $\tau$ be the string link obtained by composing a standard unlink $I \sqcup I \hookrightarrow D^2 \times I$ with the map $D^2 \times I \to D^2 \times I$ given by $(z,t) \mapsto (e^{\pi i t} z, t)$.  (Thus $\tau^2$ has  linking number $+1$.)
For a non-pure 2-string link $L$, define $2\ell$ by twice the linking number of $L\# \tau$ minus one.  This gives a monoid homomorphism $2\ell: \LL_2 \to \Z$.  On the other hand, we also have a monoid homomorphism $b: \Z \to \LL_2$ given by $b(n)= \tau^n$, whose image is clearly the braids in $\LL_2$.  The composite $2\ell \circ b : \Z \to \Z$ is the identity, so $b$ gives a right-splitting of the short exact sequence
\[
0 \to \mathrm{ker} (2\ell) \to \LL_2 \to \Z \to 0.
\]
Thus $\LL_2 \cong \mathrm{ker} (2\ell) \oplus \Z$, where the $\Z$ factor is the braids in $\LL_2$.  So if we let $\LL^0_2$ denote the quotient of $\LL_2$ by the braids, then $\LL_2 \cong \LL^0_2 \oplus \Z$.  (We chose this notation because $\LL^0_2$ is also isomorphic to $\mathrm{ker}(2\ell)$, which is the 2-string links with linking number zero.)
\end{proof}

It immediately follows that pure braids are central in $\LL_2$, a fact which we invite the reader to visualize directly.

\section{Prime decomposition for 2-string links}

We now turn our attention to pure 2-string links, in which case we can prove the following prime decomposition theorem.

\begin{theorem}
\label{2LinkPrimeDecomp}
A pure 2-string link $L$ has a prime decomposition $L=L_1 \# ... \# L_n$, where each $L_i$ is a prime pure string link.  Such a decomposition is unique up to reordering the factors in the center and up to multiplication by units (pure braids).
\end{theorem}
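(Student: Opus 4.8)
The plan is to prove existence and uniqueness separately. For \textbf{existence}, I would first show that a (non-braid) pure 2-string link admits only finitely many pairwise disjoint, pairwise non-parallel decomposing disks. Each decomposing disk is incompressible by Lemma~\ref{incompdecomp}, and the complement $M^* = (D^2\times I)\setminus N(T)$ is irreducible (an embedded sphere in $M^*$ bounds a ball in $D^2\times I$ meeting no strand of $T$, since the strands run from $\partial_- M$ to $\partial_+ M$ and cannot lie in an interior ball), so Haken's finiteness theorem bounds the size of any such system. Take a maximal system $D_1,\dots,D_{k-1}$ of disjoint, pairwise non-parallel decomposing disks, ordered from $\partial_- M$ to $\partial_+ M$, cutting $M$ into pieces $L_1,\dots,L_k$. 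Maximality forces each $L_i$ to be prime or a braid: a decomposing disk inside $L_i$ is by definition non-boundary-parallel \emph{within that piece}, hence non-parallel to its walls $D_{i-1},D_i$, so adjoining it would enlarge the system. Finally I absorb each braid (unit) factor into an adjacent prime factor, using that a prime times a braid is again prime (if $P$ is prime and $B$ a braid then $P\# B=J\# K$ gives $P=J\#(K\# B^{-1})$, so $J$ or $K$ is a braid, and $P\# B$ is not a braid since $P$ is not); if every piece is a braid then $L$ itself is a braid. This yields $L=L_1\#\cdots\#L_n$ with each $L_i$ prime.

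For \textbf{uniqueness} I would induct on the number of factors, comparing two prime factorizations realized by disk systems $\mathcal D=\{D_i\}$ and $\mathcal E=\{E_j\}$ (boundaries made disjoint and nested as in Proposition~\ref{commute}), with $\mathcal E$ isotoped to minimize the total number of intersection circles with $\mathcal D$. The goal is to peel off the innermost factor. If the innermost disk $D_1$ can be made disjoint from $\mathcal E$, then it lies in a single piece of the $\mathcal E$-decomposition; since that piece is prime, $D_1$ is boundary-parallel there, forcing $D_1$ parallel to some $E_j$ and hence $A_1$ braid-equivalent to $B_1\#\cdots\# B_j$. Primeness of $A_1$, together with Proposition~\ref{BraidsProp} (so that a braid-product of $\ge 2$ prime non-braids is neither a braid nor prime), forces $j=1$ and $A_1\sim_{\mathrm{braid}} B_1$. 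Cancelling these by Corollary~\ref{cancellative} reduces the number of factors, and the induction proceeds.

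The \textbf{main obstacle} is the case where $D_1$ cannot be isotoped off $\mathcal E$. Then the innermost curves of $D_1\cap\mathcal E$ all enclose punctures, and I would re-run the innermost-curve analysis of Proposition~\ref{commute} in this multi-disk setting. Its conclusion is exactly a cabling-annulus/splitting-disk configuration, which by Corollary~\ref{cor:central} pins down a central prime factor (a prime split link or a prime one-strand cable). Since central factors commute with everything, and such a commutation is realized by an honest isotopy via Proposition~\ref{commutecon}, I can transpose this central factor past its neighbors---an allowed move that alters the factorization only by reordering a central element---and thereby strictly decrease $|\mathcal D\cap\mathcal E|$. As minimal position has finite intersection number, iterating returns us to the disjoint case. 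The genuine difficulty lies precisely here: carefully adapting Cases A and B of Proposition~\ref{commute} from a single pair of disks to systems of disks, and verifying that each intersection-reducing move is exactly a central transposition.

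Finally I would assemble the \textbf{bookkeeping}. The induction produces a bijection of factors with $A_i\sim_{\mathrm{braid}} B_{\pi(i)}$, where a factor changes position under $\pi$ only by being transposed past neighbors; each such transposition requires, by Proposition~\ref{commute} and Corollary~\ref{cor:central}, the moved factor to be central. Hence non-central factors retain their relative order and $\pi$ reorders only central factors. To upgrade braid-equivalence to the stated equality up to units (pure braids), I would use that pure braids are central in $\LL_2$ (noted after Proposition~\ref{prop:LL}) and that the $2$-braid group is generated by the half-twist with the pure braids its even powers, so the half-twists appearing in braid-equivalences of pure factors pair up and each equivalence is realized by central pure braids. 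Collecting these central pure braids across the factorization then exhibits the two decompositions as equal up to reordering central factors and multiplication by units, as claimed.
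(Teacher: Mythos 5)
Your existence argument is sound and close in spirit to the paper's (which simply quotes the Freedman--Freedman finiteness theorem, Theorem~\ref{Freedman}); the maximal-disk-system argument and your observation that a prime stacked with a braid is again prime both work. The uniqueness argument, however, has a genuine gap at exactly the point you yourself flag as the ``main obstacle,'' and flagging it does not fill it. Your plan for the case where $D_1$ cannot be pushed off $\mathcal{E}$ is: re-run the innermost-curve analysis of Proposition~\ref{commute} in the multi-disk setting, extract a central factor, transpose it past its neighbors via Proposition~\ref{commutecon}, and conclude that this strictly decreases $|\mathcal{D}\cap\mathcal{E}|$. None of this is established. Proposition~\ref{commute} concerns a single pair of decomposing disks arising from two \emph{two-factor} decompositions of the same link with both factors prime; in the multi-disk setting an innermost curve of $D_1\cap E_j$ bounds subdisks that interact with several complementary pieces of the $\mathcal{E}$-system at once, and Cases A and B do not transfer verbatim. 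Worse, after you commute a central factor past a neighbor you obtain a \emph{new} factorization with a new system of decomposing disks, and there is no a priori comparison between the intersection number of that new system with $\mathcal{E}$ and the old one; the claimed strict decrease is asserted, not proven, so your termination argument has no foundation. (Your final bookkeeping step, where half-twists in braid-equivalences of pure factors are said to ``pair up'' into pure braids, is similarly asserted without proof.)

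The paper resolves the difficulty with a different organization worth comparing against. It first proves the Splitting Disk Lemma (Lemma~\ref{SplitDiskLemma}): a nontrivial splitting disk forces a split prime factor in \emph{every} prime decomposition. It then sorts the factors of each decomposition into split links with first strand unknotted ($T_1$), split links with second strand unknotted ($T_2$), one-strand cables ($T_3$), and everything else ($T_4$), using the classification of central primes (Corollary~\ref{cor:central}) to move these blocks together, and Schubert's theorem for knots to match factors within $T_1,T_2,T_3$. With the central factors segregated, the intersection analysis becomes a proof by \emph{contradiction} rather than a reduction move: an innermost $1$-punctured disk of intersection would produce a split prime factor where none is allowed (contradiction via the Splitting Disk Lemma), and an outermost unpunctured essential annulus, intersected with the decomposing disks of the prime pieces, must contain a cabling annulus in some prime piece, forcing that piece to be a one-strand cable (contradiction with the definition of $T_4$). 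Hence the relevant disks can always be made disjoint, after which the factor-by-factor peeling---your induction, which is fine---goes through. Your transposition idea might be salvageable, but it would require proving multi-disk analogues of Cases A and B of Proposition~\ref{commute} together with a genuine complexity-decrease statement; that is essentially the same technical work the paper packages differently, and as it stands your proposal omits it.
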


\begin{proof}
The following theorem of Freedman and Freedman implies the \emph{existence} statement.
In fact, we get the existence statement for $n$-string links for any $n$, i.e., any $n$-string link has a finite decomposition into prime factors.  We just apply the theorem below with $M$ the exterior (i.e., complement of an open neighborhood) of the $n$-string link, $F_1, . . . , F_k$ is any collection of decomposing disks restricted to the exterior, and $b=n$.

\begin{theorem}\label{Freedman}~\cite{FF} Let $M$ be a compact 3-manifold with boundary and $b$ an integer greater than
zero. There is a constant $c(M, b)$ so that if $F_1, . . . , F_k$, $k > c$, is a collection of incompressible
surfaces such that all the Betti numbers $b_{1}F_{i} < b$, $1 \leq i \leq k$, and no $F_i$, $1 \leq i \leq k$, is a boundary
parallel annulus or a boundary parallel disk, then at least two members $F_i$ and $F_j$ are parallel.
\end{theorem}

Before proving the  \emph{uniqueness} statement, we prove a lemma.

\begin{lemma}[Splitting Disk Lemma]
\label{SplitDiskLemma}
Suppose an $n$-string link $L$ in $M$ with strands labeled $1$ to $n$ and $L$ has a nontrivial $k$-punctured splitting disk with punctures corresponding to strands labeled $1,...,k$.  Then any prime decomposition of $L$ must have a prime factor $K$ which is a split string link.
Furthermore, $K$ has a splitting disk that meets each of the strands labeled $1,...,k$ at most once and is disjoint from all other strands.
\end{lemma}
\begin{proof}
Suppose $L$ has a nontrivial splitting disk $D$ with punctures corresponding to the first $k$ strands and whose boundary (without loss of generality) is contained in $\bdd_-M$.  By possibly replacing $D$ by a ``smaller" disk (lying between $D$ and $\bdd_- M$), we may assume that the $k$-string link $L'$ contained in the 3-ball with boundary $D$ union a k-punctured subdisk of $\partial_- M$ is prime. Suppose we have a prime decomposition of $L$, and let $E_1,...,E_n$ be the decomposing disks for this decomposition.  Isotope the $E_i$ so as to minimize the number of components of the intersection with $D$. Let $\alpha$ be a circle of intersection between the union of the $E_i$ and $D$ which is innermost in $D$. The loop $\alpha$ bounds a punctured disk $D_{\alpha}$ in $D$ and a punctured disk $E_{\alpha}$ in some $E_j$ such that the interior of $D_{\alpha}$ is disjoint from the union of the $E_i$. As the intersection between the $E_i$ and $D$ is was taken to be minimal, $D_{\alpha}$ is not isotopic to $E_{\alpha}$. Since $D$ is incident to only strands labeled $1,...k$, so is $D_{\alpha}$. Let $M_j$ be the complementary component of the union of the $E_i$ in $M$ that contains $D_{\alpha}$ and let $K_j$ be the portion of $L$ in $M_j$. Then $D_{\alpha}$ is a splitting disk for $K_j$ in $M_j$ that meets each of the strands labeled $1,...,k$ at most once and is disjoint from all other strands.

\end{proof}

Returning to the proof of the \emph{uniqueness} statement of the Theorem, suppose we have a prime decomposition of a 2-string link $L = L_1 \# ... \# L_n$.  Let $T_1$ be the product of all the $L_i$ which are split links with the first component unknotted.  Similarly, let $T_2$ be the product of all the $L_i$ which are split links with the second component unknotted.  Let $T_3$ be the product of all the $L_i$ which are one-strand cables.  Let $T_4$ be the product of the remaining $L_i$ in the given prime decomposition of $L$.  Since all the $L_i$ in $T_1,T_2,T_3$ are central by Corollary \ref{cor:central}, we can write $L=T_1\# T_2 \# T_3 \# T_4$.

Given another prime decomposition $L=L'_1\# ... \# L'_m$, we similarly can write $L=T'_1 \# ... \# T'_4$, where each $T'_i$ is a product of $L'_i$'s of the same type as those in $T_i$.  The first step is to show that $T_i = T'_i$ modulo braid equivalence for each $i$.  The prime decomposition theorem for knots will then imply equality of the factors in $T_i$ and $T'_i$ modulo braid equivalence for $i=1,2,3$.  It will then remain to prove that the factors in $T_4$ and $T_4'$ agree up to braid equivalence. Since 2-string pure braids are the units, the theorem will follow.

\textbf{Step 1:}
We will show that $T_1$ and $T'_1$ agree up to stacking with a pure braid.  Let $D_1$ be the 2-punctured decomposing disk properly embedded in $M$ and separating $T_1$ from $T_2\# ...\# T_4$.  Similarly, let $E_1$ be such a 2-punctured decomposing disk separating $T'_1$ from $T_2' \# ... \# T_4'$ in $L$.  Since $T_1$ is split, $D_1$ is isotopic to the union of a once-punctured annulus in $\bdd_- M$ with a once-punctured, properly embedded disk $D$ whose boundary is contained in $\bdd_- M$.  Similarly, $E_1$ is isotopic to the union of a once-punctured annulus in $\bdd_- M$ with a 1-punctured disk $E$ whose boundary is contained in $\bdd_- M$.

By an isotopy, we can take $\bdd D$ and $\bdd E$ to be disjoint in $\bdd_- M$. Perform a further proper isotopy of $D$ and $E$ so as to minimize the number of components of $D \cap E$.  Suppose that this number is nonzero.  Let $\alpha$ be a component of $D \cap E$ which is innermost in $E$. Let $D_\alpha$ be the possibly punctured disk in $D$ bounded by $\alpha$, and let  $E_\alpha$ be the possibly punctured disk in $E$ bounded by $\alpha$.  Now $D$ and $E$ each have exactly one puncture.  Thus, $D_\alpha$ and $E_\alpha$ must both have either zero punctures or one puncture.  In the case of zero, $D_\alpha \cup E_\alpha$ bounds a 3-ball, and we can get rid of $\alpha$ by pushing $D_\alpha$ across $E_\alpha$, thus contradicting the minimality of $|D\cap E|$.  Hence $D_\alpha$ and $E_\alpha$ each have one puncture.

Since $D$ and $E$ themselves have just one puncture, the argument just made shows that $\alpha$ is the unique curve in $D\cap E$ which is innermost in $E$.  Reversing the roles of $D$ and $E$ shows that among the curves in $D\cap E$, there is a unique curve $\beta$ that is innermost in $D$.  Let $D_\beta$ denote the punctured  disk that $\beta$ bounds in $D$.  We can so far see that the circles in  $D\cap E$ are all concentric in both $D$ and $E$.

In other words, the components of $D\cap E$ separate each of $D$ and $E$ into $|D\cap E|+1$ regions which look like regions of a dartboard as shown in Figure \ref{DartBoard}.  We now label each region of $D$ by a ``1" and ``2" according as the region is in $M^{E_1}_{T'_1}$ or $M^{E_1}_{T'_2 \# T'_3 \# T'_4}$.  Similarly, we label each region of $E$ by a ``1" or ``2" according as the region is in $M^{D_1}_{T_1}$ or $M^{D_1}_{T_2 \# T_3 \# T_4}$.  The labels of the outermost regions of $D$ and $E$ must be different.  In each of $D$ and $E$, the labels of regions must alternate.  Since $D$ and $E$ have the same number of regions, the labels of $D_\beta$ and $E_\alpha$ must be different.  Hence one of these disks must have the label ``2".  Without loss of generality, suppose it is $E_\alpha$ that has the label ``2".  Then $E_\alpha$ is a once-punctured splitting disk in $M^{D_1}_{T_2 \# T_3 \# T_4}$.  (If it is not a splitting disk, we contradict the minimality of $|D\cap E|$.)  By the Splitting Disk Lemma, one of the prime factors of $T_2 \# T_3 \# T_4$ must be a split link with the first strand unknotted.  This contradicts the definition of the $T_i$.  We conclude that $D$ and $E$ can be isotoped to be disjoint.

\begin{figure}[h!]
\begin{picture}(175,100)
\put(37,0){\includegraphics[scale=.7]{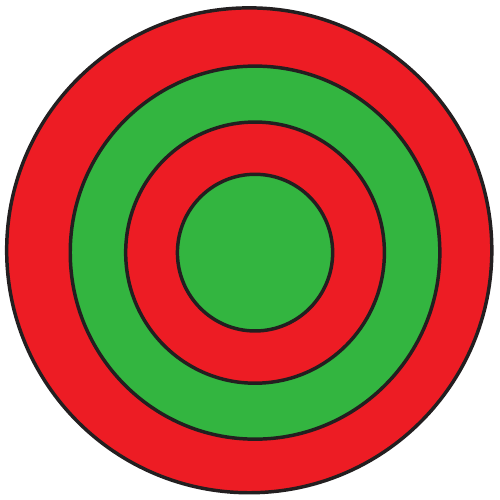}}
\end{picture}
\caption{}
\label{DartBoard}
\end{figure}

Suppose, the disjoint punctured disks $D$ and $E$ are not isotopic to each other. Since both $\partial D$ and $\partial E$ are contained in $\partial_- M$, a 2-punctured disk, then, due to the fact that no embedded 2-sphere in $M$ can meet $T$ an odd number of times, $\partial D$ is isotopic to $\partial E$ in $\partial_- M$ and each of $\partial D$ and $\partial E$ bound a once punctured sub disk of $\partial_- M$. Let $\Sigma$ be the 2-punctured sphere that is the union of $D$, $E$ and the annulus in $\partial_- M$ with boundary $\partial D \cup \partial E$. Since $D$ is not isotopic to $E$ then the 3-ball bounded by $\Sigma$ must meet $L$ in a knotted arc and $\Sigma$ is an essential 2-punctured sphere in $M$. Without loss of generality, assume that $\partial D$ is contained in the 1-punctured disk bounded by $\partial E$ in $\partial_- M$. Hence, $E$ is a a once-punctured splitting disk in $M^{D_1}_{T_2 \# T_3 \# T_4}$. By the Splitting Disk Lemma, one of the prime factors of $T_2 \# T_3 \# T_4$ must be a split link with the first strand unknotted.  This contradicts the definition of the $T_i$.  We conclude that $D$ is isotopic to $E$.

Thus, $D$ and $E$, and hence $D_1$ and $E_1$, are isotopic.  By definition of  $D_1$ and $E_1$, this shows that both $T_1$ and $T_1'$ agree up to stacking with a pure braid and that $T_2\# T_3 \# T_4$ and $T_2'\# T_3' \# T_4'$ agree up to stacking with a pure braid.  This completes Step 1.

\textbf{Step 2:}
We now want to show that $T_2=T_2'$ modulo braid equivalence.  We know that $T_2\# T_3 \# T_4$ and $T_2'\# T_3' \# T_4'$ agree up to a pure braid. We apply to this string link ($T_2\# T_3\# T_4$) the same argument as in Step 1, but with the roles of the first and second strands reversed.  This shows that $T_2 =T_2'$ modulo braid equivalence and also that $T_3 \# T_4 = T_3'\# T_4'$ modulo braid equivalence.  This completes Step 2.

\textbf{Step 3:}
We now know that $T_3\# T_4 = T_3' \# T_4'$ modulo braid equivalence so it suffices to consider the case where $L=T_3 \# T_4 = T_3' \# T_4'$ is a 2-string link in $M$ and where each of $T_3\# T_4$ and $T_3' \# T_4'$ correspond to prime factorizations where none of the factors are split links.

Let $D$ be the 2-punctured disk decomposing $L$ into $T_3$ and $T_4$, and let $E$ be the 2-punctured disk decomposing $L$ into $T_3'$ and $T_4'$.
We may assume $\bdd D \cap \bdd E= \emptyset$.  Subject to this constraint, isotope $D$ and $E$ so as to minimize the number of circles in $D\cap E$.

Suppose that this number is nonzero.
We will eventually deduce a contradiction.
Let $\alpha$ be one of these circles which is innermost in $E$.  Let $E_\alpha$ be the punctured disk in $E$ bounded by $\alpha$, and let $D_\alpha$ be the punctured disk in $D$ bounded by $\alpha$.  Since each strand of $L$ intersects each of $D$ and $E$ once, $D_\alpha$ and $E_\alpha$ have the same number of punctures.  If the number of punctures is zero, then $D_\alpha$ can be pushed across $E_\alpha$ to eliminate $\alpha$, contradicting minimality of $|D\cap E|$.  If the number of punctures is one, then $T_3$ or $T_3'$ has a nontrivial splitting disk, which by the same argument given at the end of Step 1, contradicts the assumption that $L$ has no split factors.  So the number of punctures in $D_\alpha$ (and hence also in $E_\alpha$) is exactly 2.  This implies that $\alpha$ is the only circle in $D\cap E$ which is innermost in $E$.  By a similar argument there is a circle $\beta$ in $D\cap E$ which is the unique innermost circle in $D$.  If $D_\beta$ and $E_\beta$ are the punctured disks bounded by $\beta$ in $D$ and $E$, this similar argument also shows that each of these disks has exactly 2 punctures.

So the circles in $D\cap E$ are concentric in both $D$ and $E$, that is, $D\cap E$ separates each of $D$ and $E$ into an inner 2-punctured disk and $|D\cap E|$ concentric annuli, and the 2 punctures in each of $D$ and $E$ are contained in the innermost punctured disks $D_\beta$ and $E_\alpha$.

Let $A$ be the outermost annulus complementary component of $D\cap E$ in $E$.  By the above conclusion, $A$ has no punctures.  Without loss of generality, $A$ is properly embedded in $M^D_{T_4}$. (Otherwise, reverse the roles of $T_3\# T_4$ and $T_3' \# T_4'$ so that $A$ is an embedded subsurface of $D$ in $M^E_{T'_4}$.)
If $A$ is boundary parallel in $M^D_{T_4}$, then there is an isotopy
of $E$ eliminating the component of $D\cap E$ that is contained in $\partial A$, contradicting the minimality of $|D\cap E|$.  Furthermore, $A$ is incompressible since $E$ is incompressible.  Hence $A$ is an essential annulus in $M^D_{T_4}$.

Let $\mathcal{F}$ be the union of the decomposing disks $F_1,...,F_k$ for the prime decomposition of $T_4$.
\begin{itemize}
\item[\textbf{Claim 1:}]
After an isotopy, we can assume $A \cap \mathcal{F}$ is a collection of circles none of which bounds an unpunctured disk in either $A$ or $\mathcal{F}$.  \emph{Proof}: suppose such a circle $\gamma$ bounds an unpunctured disk in one of $\{A, F_i\}$.  If the disk that $\gamma$ bounds in the other of $\{A, F_i\}$ has at least one puncture, then it must have an even number of punctures, contradicting that each strand of $L$ intersects each of $A$ and $F_i$ only once.  So $\gamma$ bounds unpunctured disks in both $A$ and $F_i$.  But then we can eliminate $\gamma$ by isotopy.
\item[\textbf{Claim 2:}]
No circle $\gamma$ of $A \cap \mathcal{F}$ bounds a 1-punctured disk in a component of $\mathcal{F}$.  \emph{Proof}: If some $\gamma$ does bound a 1-punctured disk, then, by taking $\gamma$ to be innermost, the union of this disk together with the disk which $\gamma$ bounds in $E$ is a punctured immersed sphere (with transverse intersections), which meets $L$ either once (if $\gamma$ is inessential in $A$) or 3 times (if $\gamma$ is essential in $A$); this contradicts the fact that a properly embedded 1-manifold in a 3-ball must intersect an immersed closed surface (with transverse self-intersections) in an even number of points.  (The latter fact can be shown by resolving self-intersections and the Generalized Jordan Curve Theorem \cite{GP}.)

\end{itemize}

Now $\mathcal{F}$ separates $M^D_{T_4}$ into components $M_1,...,M_k$ corresponding to the prime factors $L_1,...,L_k$ in our given decomposition of $T_4$, where each $M_i \cong D^2 \x I$.
Since $A$ has no punctures, Claim 1 above implies that no circle in $A \cap \mathcal{F}$ bounds a disk in $A$.  Thus all the circles of $A \cap \mathcal{F}$ are essential in $A$, and each $M_i$ meets $A$ in a (possibly empty) collection of annuli. Since $A$ is essential in $M^D_{T_4}$, $A$ meets some $M_i$ in an essential annulus $A^*$, i.e., an annulus $A^*$ which is knotted.
If $A^*$ is not a cabling annulus, then  $L_i$ must not be prime, a contradiction.  So $A^*$ is a cabling annulus. By Claims 1 and 2, every circle of $A \cap \mathcal{F}$ bounds a 2-punctured disk in each component of $\mathcal{F}$.
Thus both strands of $L_i$ are contained on the complementary component of $A^*$ in $M_i$ homeomorphic to $D^2\times I$, and $L_i$ is a one-strand cable.  This contradicts the definition of $T_4$, since none of the prime factors in its given decomposition were one-strand cables.  So we may now assume that $D$ and $E$ are disjoint.

Suppose  $D$ and $E$ are not isotopic.  Since $D\cap E = \emptyset$, either $D$ is contained in $M^E_{T'_4}$ or $E$ is contained in $M^D_{T_4}$; without loss of generality, suppose $E$ is contained in $M^D_{T_4}$.  As before, let $\mathcal{F}=\bigcup_i F_i$ be the union of the decomposing disks for the prime factors $L_1,...,L_k$ of $T_4$ in our given decomposition, and let $M_1,...,M_k$ be the components that the $F_i$ separate $M^D_{T_4}$ into.  If $E$ can be isotoped to be disjoint from $\mathcal{F}$, then $E$ is contained in some $M_i$.  Since the $L_i$ are prime, then $E$ must be isotopic to some $F_j$. Hence, the cabling annulus for the portion of $L$ in $M^E_{T'_3}$ meets each of $M_1,...,M_j$ demonstrating that each of $L_1,...,L_j$ is a one-strand cable and contradicting the definition of $T_4$. So $E$ must intersect $\mathcal{F}$.  By the same argument as at the beginning of this Step, after minimizing $|E \cap \mathcal{F}|$ there is a unique circle among those in $E \cap \mathcal{F}$ which is innermost in $E$.  Let $F_i$ be the component of $\mathcal{F}$ which this circle is contained in.  We can now apply the same argument as above with $F_i$ playing the role of $D$, as follows:

We deduce that the circles in $E\cap F_i$ separate $E$ into a 2-punctured disk and concentric (unpunctured) annuli.  We consider the outermost annulus in $E$. As in the argument given above, this annulus implies one of the $L_i$ is a one-strand cable, a contradiction to the definition of $T_4$.  Thus, $F_i$ and $E$ must be isotopic, which completes Step 3.

\textbf{Step 4:}
We now know that $T_i = T_i'$ modulo braid equivalence for all $i=1,...,4$.  We also know that the prime factors in $T_i$ and $T_i'$ agree modulo braid equivalence for $i=1,2,3$ by the prime decomposition theorem for knots.  Thus, it remains to show that the prime factors in $T_4$ and $T_4'$ agree modulo braid equivalence.

Suppose we have two prime decompositions of this link, $K_1\# ... \# K_m = T_4 = L_1\# ... \# L_n$ in $M$.  Let $D_1,...,D_{m-1}$ be the decomposing 2-punctured disks for the $K_i$ and let $E_1,...,E_{n-1}$ be the decomposing 2-punctured disks for the $L_i$.  Here we mean for the punctured disks to be in order, i.e., $D_1$ is the closest $D_i$ to $\bdd_-M$ and $E_1$ is the closest $E_i$ to $\bdd_-M$.

We will show that $D_i$ is isotopic to $E_i$ for every $i$.  This will give us uniqueness of the decomposition.  We start with $i=1$. We may assume that the circles $\bdd D_1$ and $\bdd E_1$ are disjoint.

Suppose that we have minimized the number of circles in $D_1 \cap E_1$ (subject to the constraint $\bdd D_1\cap \bdd E_1 =\emptyset$), and suppose first that this number is nonzero. Let $\alpha$ be a component of $D_1 \cap E_1$ that is innermost in $E_1$.  Let $E_\alpha$ be the possibly punctured disk in $E_1$ bounded by $\alpha$.  If $E_\alpha$ has no punctures, then, as previously argued, there is an isotopy of $E_1$ that eliminates a component of $D_1 \cap E_1$ which is a contradiction to the minimality of $|D_1 \cap E_1|$.

If $E_\alpha$ has just one puncture, then by the minimality of $|D_1 \cap E_1|$ and the same arguments given at the end of Step 1, we find a split prime factor, contradicting the definition of $T_4$.  Hence, by the same arguments as in the previous Steps, the innermost circle $\alpha$ bounds a 2-punctured disk in $E_1$, and all circles of $D_1 \cap E_1$ are concentric in $E_1$.

Let $\beta$ be a component of $D_1 \cap E_1$ which is outermost in $E_1$.  Let $A$ be the annulus bounded by $\beta$ and $\bdd E_1$.  By the above paragraph, $\beta$ encloses both punctures of $E_1$, and $A$ has no punctures.  If $A$ is boundary parallel in $M^{D_1}_{K_1}$ or $M^{D_1}_{K_2\# ... \# K_m}$, then we can eliminate $\beta$ (via an isotopy of $E_1$ that moves $\bdd E_1$ past $\bdd D_1$), contradicting our minimality assumption.  Thus $A$ is knotted, i.e., $A$ is an essential annulus, contained in either $M^{D_1}_{K_1}$ or $M^{D_1}_{K_2\# ... \# K_m}$.  In the first case, the primeness of $K_1$ and the fact that $\beta$ encloses both punctures of $E$ imply (as in Step 3)  that $K_1$ is a 1-strand cable, contradicting the definition of $T_4$.  In the second case, we can argue (again, as in Step 3) that some other $K_i$ is a 1-strand cable, a contradiction.

So we may now assume that $D_1$ and $E_1$ can be isotoped to be disjoint.  Then one of these punctured disks lies between $\bdd_-M$ and the other disk.  Without loss of generality, suppose $E_1$ lies between $\bdd_- M$ and $D_1$.  Then since $E_1$ is not isotopic to $\bdd_-M$ and since $K_1$ is prime, we conclude that $E_1$ must be isotopic to $D_1$.  This implies $K_1$ is braid equivalent to $L_1$ and $K_2\#...\# K_m$ is braid equivalent to $L_2\#...\#L_n$.

We may now repeat this argument for the string links $K_2\#...\# K_m$ and $L_2\#...\#L_n$ to conclude $K_2$ is braid equivalent to $L_2$ and $K_3\#...\# K_m$ is braid equivalent to $L_3\#...\#L_n$, and so on.  If $m<n$, then $E_m,...,E_{n-1}$ are decomposing disks between $D_{m-1}$ and $\bdd_+ M$ but isotopic to neither $D_{m-1}$ nor $\bdd_+ M$.  This contradicts the primeness of $K_m$.  If $n<m$, we reach a contradiction by a similar argument, by the primeness of $L_n$.  Thus $m=n$, and $D_i$ is isotopic to $E_i$ for all $i$.  Thus $K_i$ and $L_i$ agree up to braid equivalence for all $i$.  This completes the proof of the theorem.
\end{proof}

Finally we point out an equivalent restatement of this theorem in terms of the language developed in the proof of Proposition \ref{prop:LL}.  In this alternative formulation, we essentially ``remove" all the pure braids from a 2-string link.

\begin{theorem}[Reformulation of Theorem \ref{2LinkPrimeDecomp}]
Any element of $\LL^0_2$ can be written as a product of primes which is unique up to only reordering the split link and one-strand cable factors.
\qed
\end{theorem}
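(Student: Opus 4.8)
The plan is to deduce this reformulation directly from Theorem \ref{2LinkPrimeDecomp} by transporting that statement across the projection $q\co \LL_2 \to \LL^0_2$ coming from the splitting of Proposition \ref{prop:LL}. Recall that Proposition \ref{prop:LL} gives $\LL_2 \cong \LL^0_2 \oplus \Z$ with the $\Z$-summand consisting of the braids, which are exactly the units of $\LL_2$. Consequently $q$ is a monoid homomorphism whose kernel contains every braid, so $\LL^0_2$ is reduced (it has no nontrivial units), and any two braid-equivalent links have equal image: if $T_1 = B_1 \# T_2 \# B_2$ in the sense of Definition \ref{BraidEquivalenceDef}, then $q(T_1) = q(B_1)\,q(T_2)\,q(B_2) = q(T_2)$ since $q(B_i)$ is the identity. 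Thus passing to $\LL^0_2$ simultaneously removes the ``multiplication by units'' ambiguity and collapses braid-equivalence to genuine equality.

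First I would check that every class of $\LL^0_2$ is represented by a \emph{pure} $2$-string link, so that Theorem \ref{2LinkPrimeDecomp} is applicable. If $L$ has nontrivial permutation, then $L \# \tau$ is pure and $q(L \# \tau) = q(L)$ because $\tau$ lies in the braid $\Z$; hence the composite $\LL^{pure}_2 \hookrightarrow \LL_2 \to \LL^0_2$ is surjective, identifying $\LL^0_2$ with $\LL^{pure}_2$ modulo pure braids. Next I would match up primeness on the two sides: a nontrivial element $x \in \LL^0_2$ is prime if and only if $x = q(P)$ for a prime pure string link $P$ that is not a braid. In the forward direction, a pure representative $P$ of $x$ is non-braid (as $x \neq e$), and any factorization $P = A \# B$ with neither factor a pure braid would, after applying $q$, contradict primeness of $x$; conversely, a nontrivial factorization of $q(P)$ lifts (using that pure links in $\ker q$ are pure braids, together with Proposition \ref{BraidsProp}) to a non-braid factorization of $P$. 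Finally, by Corollary \ref{cor:central} the images of the prime \emph{central} factors are precisely the prime split links and prime one-strand cables.

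With these translations in hand the theorem follows. Existence is immediate: apply the existence half of Theorem \ref{2LinkPrimeDecomp} to a pure representative and take images under $q$. For uniqueness, given two prime factorizations of an element of $\LL^0_2$, I would lift each prime factor to a prime pure string link and assemble pure representatives $L = L_1 \# \cdots \# L_n \# \beta = L_1' \# \cdots \# L_m' \# \beta'$ that differ only by pure braids $\beta, \beta'$. The uniqueness clause of Theorem \ref{2LinkPrimeDecomp} asserts that these two decompositions agree after reordering the central factors and multiplying by units; applying $q$ kills the pure-braid ambiguity and, since braid-equivalent factors have equal image, matches the corresponding prime factors exactly. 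The only surviving indeterminacy is the permitted reordering of the central factors, which by Corollary \ref{cor:central} are precisely the split link and one-strand cable factors, giving exactly the asserted uniqueness.

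The step I expect to require the most care is the bookkeeping of the final paragraph: one must confirm that discarding the ``up to multiplication by units'' clause after quotienting does not silently reintroduce a reordering of the \emph{non-central} factors, and that primeness and centrality are genuinely preserved in both directions under $q$. Because $\LL^0_2$ is reduced and the non-central factors occur in a fixed order (Step 4 of the proof of Theorem \ref{2LinkPrimeDecomp} shows the decomposing disks $D_i$ and $E_i$ are isotopic in order), no new identifications or reorderings appear, so the translation is faithful and the reformulation holds.
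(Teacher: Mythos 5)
Your proposal is correct and takes essentially the same route as the paper: the paper states this result with no separate proof, treating it as an immediate translation of Theorem \ref{2LinkPrimeDecomp} across the quotient $\LL_2 \to \LL^0_2$ of Proposition \ref{prop:LL}, which is exactly the transport argument you carry out. Your write-up simply makes explicit the bookkeeping (pure representatives via $\#\tau$, primeness and centrality matching under $q$, killing the unit ambiguity) that the paper leaves to the reader.
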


For deducing a corollary concerning arbitrary 2-string links, it is convenient to have alternative definitions of equivalence and braid-equivalence of string links.

\begin{proposition}[Alternative definitions of equivalence and braid-equivalence of string links]
\label{AltStringLinkEquiv}
(1) String links $T_1$ and $T_2$ are equivalent if and only if there is a diffeomorphism of pairs $(M, T_1) \overset{\cong}{\to} (M, T_2)$ whose restriction to the boundary of $M$ (and all of its derivatives) agrees with the identity.

(2) String links $T_1$ and $T_2$ are braid-equivalent if and only if there is a diffeomorphism of pairs $(M, T_1) \overset{\cong}{\to} (M, T_2)$ whose restriction to $(\partial D^2) \x I$ (and all of its derivatives) agrees with the identity.

(3) String links $T_1$ and $T_2$ are braid-equivalent if and only if there is a diffeomorphism of pairs $(M, T_1) \overset{\cong}{\to} (M, T_2)$ which takes the subsets $\partial_\pm M$ to $\partial_\pm M$.
\end{proposition}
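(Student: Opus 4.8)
The plan is to prove all three statements by a single mechanism: passing back and forth between an ambient isotopy (which is what Definition \ref{StringLinkEquivalence} and Proposition \ref{BraidEquivalenceProp} supply) and a single diffeomorphism of the pair, using the triviality of the relevant mapping class groups of the $3$-ball and the $2$-disk. The three forward implications ($\Rightarrow$) are easy. For (1), if $T_1$ and $T_2$ are equivalent, take the time-$1$ map of the defining isotopy; since that isotopy fixes $\partial M$ and may be taken to be the identity on a collar of $\partial M$ (legitimate because string links are standard to all orders near their endpoints), the time-$1$ map is a pair-diffeomorphism agreeing with the identity to all orders on $\partial M$. For (2), run the same argument on the isotopy fixing $(\partial D^2)\times I$ provided by Proposition \ref{BraidEquivalenceProp}. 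For (3) $\Rightarrow$, begin from the diffeomorphism produced in (2): it fixes $\mathcal{A} := (\partial D^2)\times I$ pointwise, hence fixes the two boundary circles $\partial D^2\times\{0\}$ and $\partial D^2\times\{1\}$, and therefore cannot interchange their complementary disks, so it carries $\partial_\pm M$ to $\partial_\pm M$.

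For the converse of (1) I would identify $M=D^2\times I$ with $D^3$ and invoke the theorem of Cerf that a diffeomorphism of $D^3$ restricting to the identity on the boundary is isotopic to the identity rel boundary ($\Gamma_4=0$; equivalently Hatcher's resolution of the Smale conjecture). Applied to a pair-diffeomorphism $\psi$ that is the identity on $\partial M$, this yields an ambient isotopy $\Psi_t$ from the identity to $\psi$ with $\Psi_t|_{\partial M}=\mathrm{id}$ throughout; then $\Psi_1(T_1)=\psi(T_1)=T_2$, so $T_1$ and $T_2$ are equivalent. For the converse of (2), suppose $\psi$ fixes $\mathcal{A}$ pointwise. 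Its restriction to each of $\partial_\pm M$ is a diffeomorphism of a disk fixing the bounding circle (that circle lies in $\mathcal{A}$), so by Smale's theorem that $\mathrm{Diff}(D^2\text{ rel }\partial)$ is connected it is isotopic to the identity rel boundary. Extending these two boundary isotopies by the isotopy extension theorem gives an ambient isotopy $H_t$ fixing $\mathcal{A}$ with $H_1|_{\partial M}=\psi|_{\partial M}$; then $H_1^{-1}\circ\psi$ is the identity on all of $\partial M$ and carries $T_1$ to $T_2':=H_1^{-1}(T_2)$, which is braid-equivalent to $T_2$ by Proposition \ref{BraidEquivalenceProp}. Part (1) shows $T_1$ is equivalent, hence braid-equivalent, to $T_2'$, and therefore to $T_2$.

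The converse of (3) is the substantive case, and here I restrict to orientation-preserving diffeomorphisms (automatic in the forward direction). Given $\psi$ with $\psi(\partial_\pm M)=\partial_\pm M$, the restriction $\psi|_{\partial M}$ preserves $\mathcal{A}$ and each disk $\partial_\pm M$, permutes the endpoint sets $P_\pm=\{(x_i,0)\}$ and $\{(x_i,1)\}$, and may rotate the two boundary circles and Dehn-twist along $\mathcal{A}$. The strategy is to absorb all of this boundary data into braids and thereby reduce to part (2). The two key facts are that braids realize the mapping class group of the punctured disk (the motion-picture description of braids in the proof of Proposition \ref{BraidEquivalenceProp}), and that the full twist $(z,t)\mapsto(e^{2\pi i t}z,t)$, which restricts to a Dehn twist on $\mathcal{A}$, is realized by stacking a full-twist braid and is thus itself a braid-equivalence. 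Composing $\psi$ with the diffeomorphisms coming from the appropriate braids stacked on the top and bottom produces a pair-diffeomorphism $(M,T_1)\to(M,T_2')$ with $T_2'$ braid-equivalent to $T_2$ and whose restriction to $\mathcal{A}$ is isotopic to the identity; straightening as in the converse of (2) then lands in the hypotheses of part (2), which finishes the argument.

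The main obstacle is exactly the bookkeeping in the converse of (3): one must verify that each component of the boundary mapping class of $\psi$ — the two disk diffeomorphisms $\psi|_{\partial_\pm M}$ together with their endpoint permutations, and the Dehn-twisting along $\mathcal{A}$ — is genuinely realized by stacking a braid, and that the full twists introduced in this process are precisely the braid-equivalences identified above. The crux is the single observation reconciling these, namely that a rotation of $D^2$ by $2\pi$ is simultaneously isotopic to the identity through disk diffeomorphisms and traces out the Dehn twist on $\mathcal{A}$; this is what allows boundary-circle rotations and $\mathcal{A}$-twisting (which are not witnessed by any isotopy fixing $\mathcal{A}$) to nonetheless be absorbed into braid-equivalences. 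Once this identification is set up carefully, the reduction to part (2) is routine; a minor preliminary point worth isolating is putting $\psi$ into product form on collars of $\partial_\pm M$, which is permissible because string links are standard to all orders near their endpoints.
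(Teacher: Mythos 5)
Your treatment of parts (1), (2), and the forward direction of (3) is correct and is essentially the paper's argument: both hinge on Cerf's theorem ($\pi_0\,\mathrm{Diff}(D^3 \textrm{ rel } \partial)=0$) for (1) and Smale's theorem on $\mathrm{Diff}(D^2 \textrm{ rel }\partial)$ for (2). The paper packages the reduction as fibration sequences of diffeomorphism spaces, whereas you correct the boundary behavior by hand with the isotopy extension theorem and then quote part (1); the content is the same, and your version of (2) is if anything more transparent, since it makes explicit that the correcting isotopy fixes $(\partial D^2)\times I$ and hence is a braid-equivalence by Proposition \ref{BraidEquivalenceProp}.

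For the converse of (3) you genuinely diverge from the paper, and in two respects your proposal is more careful than the source. The paper writes down one more restriction fibration, with fiber claimed to be $\mathrm{Diff}(D^3, S^1\times I)$ and base $\mathrm{Diff}(D^2)\times\mathrm{Diff}(D^2)$, asserts connectivity of the total space, and stops. As written this has problems: the fiber of restriction-to-caps is the group of diffeomorphisms fixing $\partial_\pm M$ pointwise (not those fixing the annulus), and that group is \emph{disconnected} --- the full-twist map $R(z,t)=(e^{2\pi i t}z,t)$ fixes both caps but restricts on $(\partial D^2)\times I$ to a Dehn twist, which is nontrivial in $\pi_0\,\mathrm{Diff}(S^1\times I \textrm{ rel }\partial)$; the base is disconnected unless one restricts to orientation-preserving maps; and even granting connectivity, a path of cap-preserving diffeomorphisms does not fix $(\partial D^2)\times I$, so Proposition \ref{BraidEquivalenceProp} does not directly convert it into a braid-equivalence. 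Your two explicit observations address exactly these points: the orientation-preserving hypothesis is genuinely necessary (a reflection $(z,t)\mapsto(\bar z,t)$ carries $\partial_\pm M$ to $\partial_\pm M$ and carries a string link to its mirror, which need not be braid-equivalent to it --- e.g.\ a trefoil strand --- so the statement is false as literally written), and the $2\pi$-rotation/Dehn-twist identity is precisely what kills the $\pi_0$ obstruction that the paper's fibration sweeps under the rug.

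That said, the step you defer as ``routine bookkeeping'' contains the one claim that actually requires work, and you should not wave it off: namely that the full-twist diffeomorphism $R$ (and, more generally, the suspension of a loop or path of cap diffeomorphisms) carries an \emph{arbitrary} string link $T$ to something braid-equivalent to $T$. This is obvious when $T$ is a braid, but for general $T$ all the soft comparisons one is tempted to make --- for instance the straight-line isotopy in the twisting angle connecting $R$ to a collar-supported twist --- fix the caps while moving the annulus, and such isotopies certify neither equivalence in the sense of Definition \ref{StringLinkEquivalence} nor braid-equivalence via Proposition \ref{BraidEquivalenceProp}; this is the same trap that vitiates the paper's own sketch. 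One way to close it: parametrize the strands of $T$, write down the explicit family of links obtained by simultaneously compressing $T$ into $D^2\times[0,(2-s)/2]$ and interpolating the twisting angle from $\theta_0(t)=t$ to a function vanishing below level $1/2$, observe this is an isotopy of links with endpoints fixed from $R(T)$ to $T\#\Delta^2$, and then apply isotopy extension rel $\partial M$. With that lemma in hand, your absorption of the cap motions into stacked braids (using Smale to realize any orientation-preserving, boundary-trivial diffeomorphism of $(D^2,P)$ as the endpoint of a braid motion) and the final reduction to part (2) do go through.
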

\begin{proof}
Part (1):
``$\Rightarrow$":  Given an isotopy of $M$ as in Definition \ref{StringLinkEquivalence}, take desired diffeomorphism to be the time-1 slice of the isotopy.

``$\Leftarrow$":  By Cerf's Theorem \cite{Cerf}, the space of diffeomorphisms of $S^3$ is connected.  As an easy corollary, so is the space of diffeomorphisms of $D^3$ which agree with the identity on the boundary (to all orders of derivatives).  In fact, this follows by considering the fibration
\[
\mathrm{Diff}(D^3,\partial D^3) \to \mathrm{Diff}(S^3) \to \mathrm{Emb}(D^3, S^3)
\]
given by restricting to a hemisphere of $S^3$.  The base is homotopy equivalent to $SO(3)$, which is connected, while the fiber is the abovementioned space of diffeomorphisms of $D^3$ fixed on the boundary.

Now a diffeomorphism $\phi: (M,T_1) \overset{\cong}{\to} (M, T_2)$ is clealry isotopic to one that is the identity outside of a ball $D^3$ contained in $M$.  Combining this with Cerf's Theorem, we get a path (i.e., a diffeotopy) from $\phi$ to the identity.  Taking this path in reverse
 gives the desired isotopy.

 Part (2):  Recall the notion of braid-equivalence from Proposition \ref{BraidEquivalenceProp} as given by an isotopy of $M$ fixing the ``round boundary" of $M$.  Starting with this definition of braid-equivalence, the proof of the ``$\Rightarrow$" direction is the same as that of the ``$\Rightarrow$" direction in part (1).  For the ``$\Leftarrow$" direction, consider the fibration
 \[
\mathrm{Diff}(D^3,\partial D^3) \to \mathrm{Diff}(D^3, D^2) \to \mathrm{Diff}(D^2)
\]
where the total space is the space of diffeomorphisms of $D^3$ which fix one hemisphere $D^2$ of the boundary $S^2$.  Since the base space is connected (in fact, contractible), the total space is too.  Then consider the fibration
 \[
\mathrm{Diff}(D^3, D^2) \to  \mathrm{Diff}(D^3, S^1 \x I) \to \mathrm{Diff}(D^2)
\]
where the total space is the space of diffeomorphisms of $D^3$ fixing a subset $S^1 \x I$ of the boundary $S^2$.  This shows that the total space is connected.  Hence given a diffeomorphism of $M$ fixed on the ``round boundary," there is a path of such diffeomorphisms connecting it to the identity.  This is a braid-equivalence as established in Proposition \ref{BraidEquivalenceProp}.

Part (3):  Using part (2), the proof of ``$\Rightarrow$" is immediate.  For the proof of ``$\Leftarrow$", we need to just consider one more fibration sequence, namely
\[
\mathrm{Diff}(D^3, S^1 \x I) \to \mathrm{Diff}(D^3, \partial_\pm) \to \mathrm{Diff}(D^2 \sqcup D^2) \cong \mathrm{Diff}(D^2) \x \mathrm{Diff}(D^2)
\]
where the total space is the space of diffeomorphisms of $D^3$ which preserve (but don't necessarily fix) two circles in the boundary $S^2$.  This shows that the total space is connected and that a diffeomorphism as in the statement of Part (3) can be joined by a path of such diffeomorphisms to the identity.
\end{proof}

\begin{corollary}\label{cor:main}
A 2-component string link $L$ has a prime decomposition $L=L_1 \# ... \# L_n$, where each $L_i$ is a prime string link.  Such a decomposition is unique up to reordering the factors in the center and up to multiplication by units (braids).
\end{corollary}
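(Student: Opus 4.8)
The plan is to reduce the general statement to the pure case already established in Theorem \ref{2LinkPrimeDecomp}, exploiting the fact that, modulo braids, every $2$-string link can be taken to be pure. Existence is immediate and requires no new work: as noted in the proof of Theorem \ref{2LinkPrimeDecomp}, the Freedman--Freedman bound (Theorem \ref{Freedman}) produces a finite prime decomposition of \emph{any} $n$-string link, so in particular every $2$-string link $L$ has some prime decomposition. The content is therefore uniqueness.

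The organizing principle for uniqueness is Proposition \ref{prop:LL}: the splitting $\LL_2 \cong \LL^0_2 \oplus \Z$ shows that two $2$-string links have the same image in $\LL^0_2$ if and only if they are braid-equivalent, and that the ``multiplication by units (braids)'' ambiguity in the statement is precisely the $\Z$ summand. Here I would use Proposition \ref{AltStringLinkEquiv}(3) in an essential way: since braid-equivalence is realized by diffeomorphisms of pairs carrying $\partial_\pm M$ to $\partial_\pm M$, it is exactly the equivalence generated by isotopy together with relabeling the two endpoints. Consequently every $2$-string link is braid-equivalent to a pure one (append a half-twist $\tau$ when the underlying permutation is nontrivial), and the same device lets me replace each prime factor by a braid-equivalent \emph{pure} prime. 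Concretely, given a prime decomposition $L=K_1\#\cdots\#K_m$, I would insert cancelling pairs $\tau\#\tau^{-1}$ between consecutive factors and push the nontrivial permutation rightward, obtaining a braid-equivalent decomposition in which $K_1,\ldots,K_{m-1}$ are pure and only the last factor carries the permutation of $L$; appending one final half-twist $\tau^{-\epsilon}$ (with $\epsilon\in\{0,1\}$ matching the parity of $L$) then exhibits the \emph{pure} link $L\#\tau^{-\epsilon}$ as a product of \emph{pure} prime factors. Throughout, primeness is preserved because stacking with a braid $B$ sends primes to primes: if $K\#B=J\#K'$ then $K=J\#(K'\#B^{-1})$, so $J$ or $K'$ is a braid.

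Applying this reduction to two given prime decompositions of $L$ produces two pure prime decompositions of the single pure link $L\#\tau^{-\epsilon}$. To these I apply Theorem \ref{2LinkPrimeDecomp}, concluding that they agree up to reordering central factors and multiplication by pure braids. Reversing the braid-equivalences then recovers the two original decompositions of $L$ and shows they agree up to reordering central factors and multiplication by braids, which is exactly the assertion. This is also the step where Proposition \ref{prop:LL} does the bookkeeping: descending to $\LL^0_2$ collapses the (non-central) half-twist ambiguity into the unit factor, so that the reformulation of Theorem \ref{2LinkPrimeDecomp} gives uniqueness up to reordering the split-link and one-strand-cable factors, and lifting back to $\LL_2$ reinstates only the braid units.

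The main obstacle is reconciling the meaning of \emph{central factor} between the pure submonoid and the full monoid, and checking that the relevant notions are well-defined on braid-equivalence classes. For the first point I would invoke Corollary \ref{cor:ifcentral}, which characterizes the prime central elements of the \emph{full} $n$-string link monoid as one-strand cables or split links, together with Corollary \ref{cor:central} and Proposition \ref{commutecon}, which show these are genuinely central in the full monoid; hence the prime factors that are central in $\LL_2$ are exactly those central in the pure submonoid, and ``reordering central factors'' has the same meaning in both settings. For the second point I must verify that being a braid, a split link, or a one-strand cable is invariant under braid-equivalence, so that the final statement is well-defined on the braid-equivalence classes of the prime factors; this follows from Proposition \ref{BraidEquivalenceProp}, since a braid-equivalence is carried out by an ambient isotopy fixing $(\partial D^2)\x I$, which transports the defining compressing disks and cabling annuli along with the link.
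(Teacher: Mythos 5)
Your proof is correct, and at the strategic level it coincides with the paper's: both reduce to the pure case by stabilizing with the half-twist $\tau$, apply Theorem \ref{2LinkPrimeDecomp} to the resulting pure link, and transfer the conclusion back through braid-equivalence. The tactical difference lies in how the two given prime decompositions of $L$ are converted into pure prime decompositions. The paper does this geometrically: $(D^2\times I, L)$ is diffeomorphic to $(D^2\times I, L\#\tau)$ via an untwisting diffeomorphism that carries decomposing disks to decomposing disks, and Proposition \ref{AltStringLinkEquiv}(3) is then genuinely needed to conclude that the factors cut out by the image disks are braid-equivalent to the original factors, since that diffeomorphism is not an isotopy fixing $(\partial D^2)\times I$. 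You instead work entirely inside the monoid: you insert cancelling pairs $\tau\#\tau^{-1}$ between consecutive factors to push the permutation into the last factor, and you use the correct observation that stacking with units preserves primeness (your computation $K=J\#(K'\#B^{-1})$ is the right one; one should also note that $K\#B$ cannot itself be a braid). Your route is more elementary, requiring only Definition \ref{BraidEquivalenceDef} and the fact that braids form a group; in particular, your claim to use Proposition \ref{AltStringLinkEquiv}(3) ``in an essential way'' is superfluous, because $L$ is braid-equivalent to $L\#\tau^{-\epsilon}$ directly by definition. What the paper's geometric route buys is a statement at the level of decomposing disks, matching the disk-based bookkeeping of Theorem \ref{2LinkPrimeDecomp}; what yours buys is independence from that machinery. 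Both arguments face the same final bookkeeping issue --- ``central'' in Theorem \ref{2LinkPrimeDecomp} means central in the pure monoid, while the Corollary refers to the full monoid $\LL_2$ --- and you actually treat this more carefully than the paper does, by combining Corollaries \ref{cor:ifcentral} and \ref{cor:central} with the centrality of braids implicit in the splitting of Proposition \ref{prop:LL}.
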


\begin{proof}
If $L$ is pure, the corollary follows immediately from Theorem \ref{2LinkPrimeDecomp}.  So assume that $L$ is not pure. The existence of a prime decomposition again follows from Theorem \ref{Freedman}. Given two prime decompositions for $L$, each is defined (up to braid equivalence of the factors) by a complete set of decomposing disks $\mathcal{E}$ and $\mathcal{D}$ respectively. If $\tau$ is the generator of the 2-strand braids, then $L\# \tau$ is a pure braid. However, $(D^2\times I, L)$ is diffeomorphic to $(D^2\times I, L\# \tau)$ via a diffeomorphism
which takes decomposing disks to decomposing disks. So the image of $\mathcal{E}$ and $\mathcal{D}$ under this diffeomorphism are complete sets of decomposing disks for for $L\# \tau$ that decompose $(D^2 \x I, L\# \tau)$ into
pieces diffeomorphic to those in the original decompositions for $L \# \tau$
By part (3) of Proposition \ref{AltStringLinkEquiv}, the prime factors of $L \# \tau$ determined by these decomposing disks are braid-equivalent to those in the decompositions for $L$.
By Theorem \ref{2LinkPrimeDecomp}, the two decompositions for $L\# \tau$ are related by reordering the factors in the center and multiplication of factors by units.
Thus the two decompositions of $L$ are related via reordering the factors in the center and up to multiplication by units.
\end{proof}

\bibliographystyle{plain}
\bibliography{bib}

\end{document}